\def\subsection{
	\@startsection{subsection}{2}\z@{.5\linespacing\@plus.7\linespacing}{.5\baselineskip}{\bfseries\centering}
}
\theoremstyle{definition}
\newtheorem{Def}{Definition}[section]
\newtheorem*{Not}{Note}
\newtheorem{Exe}{Example}
\newtheorem{Rem}{Remark}[section]
\theoremstyle{theorem}
\newtheorem{The}{Theorem}[section]
\newtheorem{Pro}{Proposition}[section]
\newtheorem{Cor}{Corollary}[section]
\newtheorem{Lem}{Lemma}[section]
\newcommand{\R}{\mathbb{R}}
\newcommand{\N}{\mathbb{N}}
\newcommand{\Z}{\mathbb{Z}}
\newcommand{\lcs}{\frak{lcs}}
\title{Morse-Novikov homology and $\beta$-critical points}
\author{Adrien Currier}
\address{Nantes Université, Laboratoire de Mathématiques Jean Leray, LMJL,
	UMR 6629, F-44000 Nantes, France}
\email{arien.currier@univ-nantes.fr}
\date{}
\begin{document}
	\maketitle
	\begin{abstract} Given a manifold $M$, some closed $\beta\in\Omega^1(M)$ and a map $f\in C^\infty(M)$, a $\beta$-critical point is some $x\in M$ such that $d_\beta f_{x}=0$ for the Lichnerowicz derivative $d_\beta$. In this paper, we will give a lower bound for the number of $\beta$-critical points of index $i$ of a $\beta$-Morse function $f$ in terms of the Morse-Novikov homology, and we generalize this result to generating functions (quadratic at infinity). We also give an application to the detection of essential Liouville chords of a set length. These are a type of chords that appear in locally conformally symplectic ($\lcs$) geometry as even-dimensional analogues to Reeb chords.
	\end{abstract}
\tableofcontents
\section{Introduction}
Morse-Novikov homology (see second section for an introduction to Morse-Novikov homology) was first introduced by S. Novikov in the 80s (see \cite{Novikov1981MULTIVALUEDFA} and \cite{Novikov1982TheHF}). Note that in the literature, what appears as Morse-Novikov cohomology is often not the cohomology theory stemming from the Morse-Novikov chain complex, but the cohomology of the chain complex $(\Omega^*(M),d_\beta)$ on a manifold $M$, with $d_\beta$ the Lichnerowicz derivative for some closed $\beta\in\Omega^1(M)$.
\begin{Def}
	Let $M$ be a manifold and $\beta\in\Omega^1(M)$ be closed. The Lichnerowicz derivative associated to $\beta$ is the map:
\begin{align*}
	d_\beta:\Omega^*(M)&\rightarrow\Omega^{*+1}(M)\\
	\alpha&\mapsto d\alpha-\beta\wedge\alpha
\end{align*}
\end{Def}
Do note that this map is indeed linear and $d_\beta^2=0$.

To avoid any confusion, the cohomology of $(\Omega^*(M),d_\beta)$ will be called the Lichnerowicz cohomology. While it is indeed ``almost'' equal Morse-Novikov cohomology, it can differ in some cases.\newline

Since its first formulation, Morse-Novikov homology has appeared in multiple places, from the study of fixed points of symplectomorphisms (cf. \cite{Vn1995SymplecticFP}), to knot theory (cf. \cite{Weber2002MorseNovikovNF}). A notable use of this homology theory comes from Chantraine and Murphy's paper (\cite{Chantraine2016ConformalSG}) in which they show that the total number of $\beta$-critical points (see definition below) of a generic generating function is at least the sum of the ranks of the Morse-Novikov homology group. This result is to be compared with the results of classical Morse theory, for which the number of critical point of a specific index can be bounded from below not only by the rank of the Morse homology, but also by the torsion numbers.

\begin{Def}
Let $M$ be a manifold, $\beta\in\Omega^1(M)$ be closed and $f:M\rightarrow\R$ be smooth.
\begin{enumerate}
	\item The set of $\beta$-critical points of $f$ will be $Crit^\beta(f):=\{x\in M: d_\beta f_{|x}=0\}$.
	\item Let $\tilde{M}_\beta$ be the integral cover of $\beta$. Take $g$ a primitive of $\beta$ on that cover and $F$ the pullback of $f$ to the cover. Then $f$ will be said to be $\beta$-Morse if and only $e^{-g}F$ is Morse. Equivalently, $f$ is said to be $\beta$-Morse if $d_\beta f$ intersects the $0$-section transversely. 
	\item If $f$ is $\beta$-Morse, then a $\beta$-critical point $x$ will be said to be of index $j$ if there is a locally defined primitive $g$ of $\beta$ around that point such that $x$ is a critical point of index $j$ of the locally-defined Morse function $e^{-g}f$. The set of $\beta$-critical points of index $j$ will be written $Crit_j^\beta(f)$.
\end{enumerate}
\end{Def}

Note that being $\beta$-Morse is a generic property. Morse-Novikov homology is uniquely adapted to study the $\beta$-critical points; let us see why with an example. Since $\beta=d_\beta(-1)$, the first $\beta$-Morse function to consider is the constant function equal to $-1$ when $\beta$ intersects the $0$-section transversely.

A naive adaptation of Morse homology would lead us to consider a chain complex freely generated of the critical points, and whose homology count the flow lines of a ``gradient'' vector field $X$ that is the dual of $d_\beta (-1)$ for some Riemannian metric. However, in classical Morse theory, following those flow lines would lead the function to decrease, and, in particular, following those flow lines from a critical point to another will never let us backtrack to a previous critical point. Observe that this is not the case here: taking any genus $g\geq 2$ surface, a closed non-exact $1$-form on that surface will have critical points and its gradient vector field will have closed orbits. Therefore, it becomes necessary to keep track of how many loops we make through a local system. While several local coefficient homologies could keep track of such data, Morse-Novikov homology stands out by providing some of the best lower bounds for our toy example $d_\beta (-1)$, as long as it is $\beta$-Morse. Indeed, by classical Morse-Novikov theory, \[\# Crit^\beta_j(d_\beta (-1))\geq rank(HN_j(M,\beta))+q_j+q_{j-1},\] where $HN$ stands for the Morse-Novikov homology, and $q_j$ (resp. $q_{j-1}$) is the minimal number of generators of the torsion subgroup of the $j$-th (resp. $(j-1)$-th) Morse-Novikov homology group. Those two last quantities are known as the torsion number, while the rank of the $j$-th homology group is the $j$-th Novikov number. This bound is extremely similar to that given by Morse homology for Morse functions.\newline

This paper fits in the broader context of the study of rigidity phenomena in locally conformally symplectic ($\lcs$) geometry through generating functions and, and more precisely, is motivated by the need for a good ``Morse theory for Morse-Novikov homology''. Both of these are important stepping stones towards a sheaf theory for $\lcs$ geometry. Note that in \cite{currier2025projectionexactlagrangianslocally}, the author proves that not every exact Lagrangian (of ``$\lcs$ type'') has a generating function, putting some limitations on that strategy. However, other strategies, such as an adaptation of Floer homology, encounter many more hurdles (see \cite{Chantraine2016ConformalSG} for more on that). Nevertheless, some limited success has been achieved in adapting Floer homology (see \cite{Oh2023PCLCM}).\newline

One of the aims of this paper is to present a strategy for the proof of this theorem that can be directly translated to sheaf theory, allowing for a better study of $\lcs$ topology. Indeed, derived sheaf theory has been successfully employed by several authors to study symplectic geometry, such as Kashiwara and Shapira (\cite{Kashiwara1990SheavesOM}) or Guillermou (\cite{Guillermou2019SheavesAS}). As such, we will endeavor to prove:

\begin{The}\label{thm1}
Let $M$ be a closed connected manifold and $\beta\in\Omega^1(M)$ be closed. We will call $\beta$ the various pullbacks of $\beta$. Take $F:M\times\R^m\rightarrow\R$ (for some $m\in\mathbb{N}$) a smooth map that is equal to a quadratic form outside of a compact. Let $p$ be the dimension of the vector subspace on which $F$ is negative definite at infinity. Then, if $F$ is $\beta$-Morse, we have
\[\# Crit^\beta_j(F)\geq rank(HN_{j-p}(M,\beta))\]
where $HN$ stands for Morse-Novikov homology.\end{The}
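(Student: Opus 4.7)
The plan is to construct a Morse-Novikov chain complex $CN_*(F)$ for the generating function $F$, compute its homology to be $HN_{*-p}(M,\beta)$, and then conclude by the standard bound that the $j$-th chain module, being free on $Crit^\beta_j(F)$, has $\Lambda_\beta$-rank at least the rank of the $j$-th homology (with $\Lambda_\beta$ the Novikov ring associated to $[\beta]$).

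First I would lift to the integral cover $\tilde{M}_\beta\times\R^m$ of the pulled-back $\beta$ on $M\times\R^m$, and pick a primitive $G$ of $\beta$ there. The $\beta$-Morse assumption says that $e^{-G}\tilde F$ is Morse in the classical sense, and its critical points project exactly to $Crit^\beta(F)$ with matching indices. Choosing a deck-equivariant Riemannian metric that is a product metric outside a compact set, the quadratic-at-infinity hypothesis on $F$ confines all negative gradient trajectories to a compact subset of the $\R^m$ factor. One thus obtains a well-defined complex $CN_*(F)$, freely generated over $\Lambda_\beta$ by $Crit^\beta(F)$, graded by the $\beta$-index, with differential counting gradient trajectories on the cover weighted by the deck translation they realize.

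Next I would compute $HN_*(F)$ by comparing $F$ to a split model. Pick a $\beta$-Morse function $f:M\to\R$ (such exist by genericity, as noted in the excerpt) and a nondegenerate quadratic form $Q$ on $\R^m$ of index $p$ agreeing with the quadratic part of $F$ at infinity, and set $F_0(x,v)=f(x)+Q(v)$. Every $\beta$-critical point of $F_0$ is of the form $(x_0,0)$ with $\beta$-index $\mathrm{ind}^\beta(x_0)+p$, and the gradient splits, so $CN_*(F_0)$ is identified with the degree-shifted Morse-Novikov complex of $(f,M,\beta)$, whose homology is $HN_{*-p}(M,\beta)$. Interpolating between $F$ and $F_0$ through generating functions that remain quadratic at infinity (first adjusting the quadratic part of $F$ to $Q$ through a path of nondegenerate quadratic forms of index $p$, which is possible since this space is path-connected) yields, by the standard Morse-theoretic continuation construction, a quasi-isomorphism $CN_*(F)\simeq CN_*(F_0)$.

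Combining the two steps gives $HN_j(F)\cong HN_{j-p}(M,\beta)$, whence
\[\#Crit^\beta_j(F)=\mathrm{rank}_{\Lambda_\beta}CN_j(F)\geq\mathrm{rank}\,HN_j(F)=\mathrm{rank}\,HN_{j-p}(M,\beta).\]
The main obstacle lies in the continuation step in the Novikov setting: although the total space is noncompact and the deck group infinite, one must prove that moduli spaces of continuation trajectories are compact modulo deck action, with the Novikov weights absorbing the remaining non-compactness. This is the standard Novikov-theoretic action-bound argument, but making it work \emph{uniformly along the entire homotopy} while keeping the interpolating functions quadratic at infinity is where the bulk of the analytic care is needed; once this is in place, everything else is a direct transcription of the classical argument.
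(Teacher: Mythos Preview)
Your approach is the natural Floer-theoretic one and is genuinely different from the paper's, but there is a gap \emph{before} the continuation step you flag: the complex $CN_*(F)$ is not well defined as described. In ordinary Novikov theory the primitive $h$ of a closed $1$-form transforms additively under the deck group, $h(\alpha\cdot x)=h(x)+c_\alpha$; a trajectory from $\tilde p$ to $\alpha\cdot\tilde q$ then forces the one-sided bound $c_\alpha<h(\tilde p)-h(\tilde q)$, which is precisely what makes the trajectory count land in the Novikov ring. Your Lyapunov function $e^{-G}\tilde F$ instead transforms \emph{multiplicatively}, $(e^{-G}\tilde F)(\alpha\cdot x)=e^{-c_\alpha}(e^{-G}\tilde F)(x)$, and $d_\beta F$ is not a closed $1$-form. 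When $F$ takes both signs---as it does whenever the quadratic tail is indefinite---the inequality $(e^{-G}\tilde F)(\tilde p)>(e^{-G}\tilde F)(\alpha\cdot\tilde q)$ bounds $c_\alpha$ from below at endpoints with $F>0$ and from above at endpoints with $F<0$, so no single Novikov completion of $\Z[\mathcal H]$ can receive the differential, and the energy of trajectories out of a fixed $\tilde p$ is unbounded. Even when $F\ge 0$ the function $e^{-G}\tilde F$ is not proper on the cover, so ordinary Morse-theoretic compactness is unavailable as well. The same obstruction hits your split model: there is no off-the-shelf ``Morse--Novikov complex of $(f,M,\beta)$'' for a function $f$, and the paper's introduction alludes to exactly this difficulty when it notes that Floer-type adaptations ``encounter many more hurdles''.

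The paper therefore avoids trajectory counts altogether. It takes growing cubes $W_k$ of fundamental domains in $\tilde M_\beta$, proves the algebraic inequality $(2k+1)^{r+1}\,\mathrm{rank}\,HN_j(M,\beta)\le\mathrm{rank}\,H_j(W_k,\partial_-W_k)$, and then extends $(e^{-g}F)|_{W_k\times E_b^a}$ by hand to a genuine Morse function on a compact cobordism with corners, adding only $O(k^r)$ critical points along the boundary. Classical Morse inequalities for that cobordism together with a Thom isomorphism yield $(2k+1)^{r+1}\#Crit^\beta_j(F)+O(k^r)\ge(2k+1)^{r+1}\,\mathrm{rank}\,HN_{j-p}(M,\beta)$; dividing through and sending $k\to\infty$ finishes. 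Your analytic compactness problem is thus replaced by a finite-dimensional growth-rate argument on compact pieces of the cover.
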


As stated above, this is one step closer to Morse theory which state that (under some genericity conditions):\[\# Crit_j(F)\geq rank(H_{j-p}(M))+q_{j-p}'+q_{j-1-p}',\]
where $q_i'$ is the torsion number of $H_i(M)$ for each $i$. As this observation implies, there is still some work to do in that direction.\newline

Nevertheless, the strategies explored in section \ref{sec3} still have uses beyond proving the previous theorem and providing the beginning of a framework for a derived sheaf theory (similar to that of Guillermou, Kashiwara and Shapira) adapted to $\lcs$ geometry. To illustrate this, we will apply them to the study of (essential) Liouville chords.

First defined by L. Traynor in \cite{Traynor2001GeneratingFP}, the difference function (in contact geometry) can be used to count Reeb chords between two Legendrians. However, an adaptation of this function to $\lcs$ geometry requires some modifications as Liouville chords generically come in $1$-parameter families. To this end, we introduce the following modification:

\begin{Def}\label{t-diff}
	Let $F_1:M\times\mathbb{R}^{k_1}\rightarrow\R$ and $F_2:M\times\mathbb{R}^{k_2}\rightarrow\R$ be generating functions. Then the $t$-difference function is the map :
	\begin{align*}
		\Delta^t_{F_1,F_2}:M\times\R^{k_1}\times\R^{k_2}&\rightarrow \R\\
		(x,\xi_1,\xi_2)&\mapsto F_2(x,\xi_2)-e^tF_1(x,\xi_1)
	\end{align*}
\end{Def}

We can use this difference function to study the number of (essential) Liouville chords of length $t$ between two exact Lagrangians that have generating functions.

\begin{Pro}\label{prop}
	Assume that $F_1$ and $F_2$ are generating functions quadratic at infinity that are strictly positive on the $\beta$-exact Lagrangians they define, and that $\Delta^t_{F_1,F_2}$ is $\beta$-Morse.
	
	Then the number of Liouville chords of length $t$ from $L_{F_1}$ to $L_{F_2}$ is bigger or equal to $\sum_i rank(HN_i(M,\beta))$.
	
	If $0$ is a regular value of $\Delta^t_{F_1,F_2}$, then we also have the following assertions: 
	
	\begin{enumerate}
\item The number of essential (positive) Liouville chords of length $t\geq0$ from $L_{F_1}$ to $L_{F_2}$ is bigger or equal to: \[\sum_i rank(HN_i(\{\Delta^t_{F_1,F_2}\geq 0\},\beta)).\]
\item The number of essential negative Liouville chords of length $t\leq0$ from $L_{F_1}$ to $L_{F_2}$ is bigger or equal to: \[\sum_i rank(HN_i(\{\Delta^t_{F_1,F_2}\leq 0\},\beta)).\]
\item The number of essential Liouville chords of length $t>0$ from $L_{F_1}$ to itself is bigger or equal to: \[\sum_i rank(HN_i(\{\Delta^t_{F_1,F_1}\geq 0\},\beta)).\]
	\end{enumerate}
Here, the Morse-Novikov homologies are computed taking the pullback of $\beta$ to the pertinent sets.
\end{Pro}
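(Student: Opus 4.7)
My plan is to reduce Liouville chord counting to $\beta$-critical point counting via the $t$-difference function and then apply Theorem \ref{thm1}: directly for the first claim, and in a sublevel-set/relative version for the essential chord claims.

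The first step is to establish the correspondence between $\beta$-critical points of $\Delta^t_{F_1,F_2}$ and Liouville chords of length $t$ from $L_{F_1}$ to $L_{F_2}$. Writing $d_\beta\Delta^t_{F_1,F_2}=0$ in coordinates, the fibre equations force $\partial_{\xi_i}F_i=0$, so $(x,\xi_1)$ and $(x,\xi_2)$ are fibre-critical points and project to points of $L_{F_1}$ and $L_{F_2}$. The remaining base-direction equation $d_x\Delta^t_{F_1,F_2}=\Delta^t_{F_1,F_2}\,\beta_x$ then encodes, via the $\beta$-primitive relations defining the generating functions, exactly the Liouville chord condition of length $t$ between these two Lagrangian points. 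The first statement follows immediately: $\Delta^t_{F_1,F_2}$ is quadratic at infinity (being a linear combination of quadratic-at-infinity functions) and $\beta$-Morse by hypothesis, so Theorem \ref{thm1} yields $\#\mathrm{Crit}^\beta_j(\Delta^t_{F_1,F_2})\geq\mathrm{rank}(HN_{j-p}(M,\beta))$ for each $j$, and summing over $j$ gives the claimed bound.

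For the three refined assertions, the regularity of $0$ guarantees that $N^{\pm}:=\{\pm\Delta^t_{F_1,F_2}\geq 0\}$ are smooth manifolds with boundary. The positivity hypothesis $F_i>0$ on $L_{F_i}$ is used to segregate essential chords from degenerate configurations. For assertion (3) for example, any ``diagonal'' critical configuration with $F_1=F_2$ and $\xi_1=\xi_2$ satisfies $\Delta^t_{F_1,F_1}=(1-e^t)F_1<0$ when $t>0$, hence sits strictly inside $N^{-}$; so $N^{+}$ contains only the $\beta$-critical points that correspond to essential positive self-chords. Parallel reasoning identifies essential positive (resp. negative) chords with $\beta$-critical points of $\Delta^t_{F_1,F_2}$ lying in $N^{+}$ (resp. $N^{-}$) for assertions (1) and (2). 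One then applies an adaptation of Theorem \ref{thm1} to the manifold-with-boundary $N^{\pm}$, equipped with the restricted function and the pulled-back $\beta$, to obtain the Morse-Novikov lower bound in terms of $HN_*(N^{\pm},\beta)$.

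The main obstacle will be this last adaptation: Theorem \ref{thm1} is stated for closed base manifolds, and transferring it to the sublevel sets $N^{\pm}$ requires careful control of the boundary hypersurface $\{\Delta^t_{F_1,F_2}=0\}$. The positivity hypotheses on $F_1,F_2$ are placed precisely so that the function's behaviour near this boundary is tame, letting a relative Morse-Novikov argument --- either via a cut-off extension of $\Delta^t_{F_1,F_2}$ to an ambient closed object, or directly on the chain complex of the sublevel set --- recover the bound. Verifying that no $\beta$-critical point lies on the boundary (so that the counts are clean) and that the relative Novikov chain complex does compute $HN_*(N^{\pm},\beta)$ is where the real work of the proof sits.
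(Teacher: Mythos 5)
Your reduction of the first claim is the same as the paper's: Lemma-level identification of Liouville chords of length $t$ with $Crit^\beta(\Delta^t_{F_1,F_2})$, followed by Theorem \ref{thm1} summed over all indices. That part is fine.

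For assertions (1)--(3), however, there is a genuine gap: you correctly locate where the work is (``a relative Morse-Novikov argument \dots is where the real work of the proof sits'') but you do not carry it out, and two specific points that the paper's proof supplies are missing from your sketch. First, the reason the extension/cobordism construction goes through on the sublevel set is that along $\{\Delta^t_{F_1,F_2}=0\}$ one has $d_\beta\Delta^t_{F_1,F_2}=d\Delta^t_{F_1,F_2}$, so $d(e^{-g}\Delta^t_{F_1,F_2}\circ\pi)$ points \emph{out of} $\pi^{-1}(\{\Delta^t_{F_1,F_2}\leq 0\})$ there (this is where regularity of $0$ is used); hence the hypersurface $\{\Delta^t_{F_1,F_2}=0\}$ can be absorbed into the positive boundary of the cobordism $E_k=W_k\times\R^{k_1}\times\R^{k_2}\cap\pi^{-1}(\{\Delta^t_{F_1,F_2}\leq0\})$, the extension adds no critical points along it, and the relevant relative homology is taken rel $\partial_-W_k\times\R^{k_1}\times\R^{k_2}\cap\pi^{-1}(\{\Delta^t_{F_1,F_2}\leq0\})$ only. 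One also needs (for a generic representative of $[\beta]$) transversality of $\pi^{-1}(\{\Delta^t_{F_1,F_2}=0\})$ with $\partial\overline{\tilde V_\beta}$ so that an $\mathcal{H}$-equivariant CW structure exists; your proposal does not address either point. Second, and more substantively, the counting argument naturally produces a lower bound in terms of $H_i\big(C_*(\pi^{-1}(\{\Delta^t_{F_1,F_2}\leq0\}),\R)\otimes_{\R[\mathcal{H}]}\R(\mathcal{H})\big)$, where $\mathcal{H}$ is the deck group of the \emph{ambient} integral cover, whereas the statement is about $HN_i(\{\Delta^t_{F_1,F_2}\leq0\},i^*\beta)$, which is built from the integral cover of the sublevel set with its own (possibly larger) deck group $\mathcal{H}'$. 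The paper's Lemma \ref{lemma2} bridges exactly this discrepancy by a rank inequality; without it (or an equivalent), your ``relative Novikov chain complex does compute $HN_*$'' step is not just unverified but false as stated, since the two homologies need not coincide. Finally, the paper reduces (1) and (3) to the negative case (2) via $\{\Delta^t_{F_1,F_2}\geq0\}=\{\Delta^{-t}_{F_2,F_1}\leq0\}$, which keeps the boundary-orientation analysis uniform; your symmetric treatment of $N^{\pm}$ would require redoing the entering/exiting analysis on the positive side.
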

\begin{Rem}
The first part of the proposition may be viewed as a generalization of the theorem \ref{thm1}. Indeed if the two Lagrangians intersect the $0$-section at the same point, they will have a Liouville chord of any length between them at that point. In the case where $F_1=F_2$, this means that each time $L_{F_1}$ intersects the $0$-section (that is to say, each time $d_\beta F_1=0$), there will be a Liouville chord of any length.
	
Also, note that when the two Lagrangians (of ``$\lcs$ type'') are different, length $0$ Liouville chords happen when the two Lagrangians intersect.
\end{Rem}
\begin{Rem}
A way to interpret essential Liouville chords is the following. Let $\Lambda_{e^tF_1}$ (resp. $\Lambda_{F_2}$) be the Legendrian lift of $L_{e^tF_1}$ (resp. $L_{F_2}$) to $J^1M$, endowed with its canonical contact form. An essential (positive) Liouville chord from $L_{F_1}$ to $L_{F_2}$ of length $t$ corresponds to a (positive) Reeb chord from $\Lambda_{e^tF_1}$ to $\Lambda_{F_2}$. The value of $\Delta^t_{F_1,F_2}$ the corresponds to the length of the Reeb chord.
\end{Rem}
This proposition ties in with the author's previous preprint (\cite{currier2025projectionexactlagrangianslocally}), in which it appears that Liouville chords are to $\lcs$ geometry what Reeb chords are to contact geometry. This observation motivates the adaptation and use of techniques that are used to study Reeb chords.\newline

The layout of this paper is as follows. We will start by giving reminders about Morse-Novikov homology in section \ref{sec2}, as well as explaining how it relates to Lichnerowicz cohomology to set the record straight. This will then be followed by a proof of this theorem for $m=0$ in section \ref{sec3}. Then, section \ref{sec4} will generalize the statement to $m\geq 0$, thus proving the above theorem. Finally, section \ref{sec5} will begin with a short reminder about $\lcs$ geometry and end with a proof of proposition \ref{prop}.\newline

\paragraph{\textbf{Acknowledgments.}} The author would like to thank both B. Chantraine and F. Laudenbach for their feedback.

\section{Morse-Novikov homology}\label{sec2}

Given a manifold $M$ and a closed $1$-form defined on $M$, there are two main ways of defining the Morse-Novikov homology of $M$ associated to $\beta$. The first, original, description of this homology is as the Morse homology associated to $\beta$.  With this description, the chain complex is freely generated over the critical points of $\beta$, and the differential counts the orbits (of the flow associated to a vector field which is the dual of $\beta$ for some metric) from one critical point to another. However the definition of the differential needs to be slightly modified to account for the potential presence of homoclinic orbits. The second definition of the Morse-Novikov homology, and the one which we will use in the rest of this paper, is more algebraic in nature.

The description of Morse-Novikov homology will be based off of Farber's book, \cite{Farber2004TopologyOC}, which is a good reference for all of one's Morse-Novikov-related needs. Do note that Farber's description of Morse-Novikov homology is somewhat different to that of Novikov. However, both homologies are equal whenever $\beta$ satisfies the conditions necessary to make Novikov's description work.

\begin{Def}
	Let $\Gamma$ be a subgroup of $(\mathbb{R},+)$. The Novikov ring is the ring: \[Nov(\Gamma)= \left\{ \sum_{i=0}^{+\infty}a_i t^{\gamma_i}\,:\,\forall i\in\mathbb{N}, a_i\in\mathbb{Z}, \gamma_i\in\Gamma\textit{ and }\lim_{+\infty}\gamma_i=-\infty \right\}.\]
	Let $\sum_{\alpha\in\Gamma}a_\alpha t^\alpha$ and $\sum_{\beta\in\Gamma}b_\beta t^\beta$ be two elements of $Nov(\Gamma)$, then the sum is given by:
	\[\sum_{\alpha\in\Gamma}a_\alpha t^\alpha+\sum_{\beta\in\Gamma}b_\beta t^\beta=\sum_{\alpha\in\Gamma}(a_\alpha+b_\alpha) t^\alpha;\]
	and the product is given by:
	\[\sum_{\alpha\in\Gamma}a_\alpha t^\alpha\times\sum_{\beta\in\Gamma}b_\beta t^\beta=\sum_{\alpha,\beta\in\Gamma}(a_{\alpha}\times b_{\beta}) t^{\alpha+\beta};\]
\end{Def}
Through this ring, we can define a local coefficients system as follows:
\begin{Def}
	Let $M$ be a manifold and $\beta\in\Omega^1(M)$ be closed. Let \[\Gamma_\beta:=\{<[\beta],h(\gamma)>\,:\,\gamma\in\pi_1(M)\}\] where $[\beta]$ is the cohomology class of $\beta$ and $h$ is the Hurewicz morphism. The morphism of $\Z$-modules: \begin{align*}
		\phi\,:\, \mathbb{Z}[\pi_1(M)]\rightarrow & Nov(\Gamma_\beta)\\
		\gamma\in\pi_1(M)\mapsto & t^{<[\beta],h(\gamma)>}
	\end{align*} defines a local coefficient system $\mathcal{L}_\beta$. 
	
	The Morse-Novikov chain complex is the chain complex $C_*(M,\mathcal{L}_\beta)$. The homology of this complex is the Morse-Novikov homology $H_*(M,\mathcal{L}_\beta)$, denoted by $HN_*(M,\beta)$.
\end{Def}

\begin{Rem}
	Some define the Morse-Novikov homology by taking the homology after tensorizing the chain complex $C_*(M,\mathcal{L}_\beta)$ with $Nov(\mathbb{R})$, which is a flat $Nov(\Gamma_\beta)$-module. However, applying the universal coefficient theorem (and keeping in mind the flatness of $Nov(\Gamma_\beta)$) shows that the difference is merely cosmetic.
\end{Rem}

We should point out that the definition of $\Gamma_\beta$ does not depend on the choice of an element in the cohomology class of $\beta$. This leads us to the following:

\begin{Pro}[\cite{Farber2004TopologyOC}]
	Let $\beta$ and $\beta'$ be two closed $1$-forms on a manifold $M$ such that $[\beta]=[\beta']\in H^1_{dR}(M)$, then \[HN_*(M,\beta)\simeq HN_*(M,\beta').\] In particular, for any $g\in C^\infty(M)$, then \[HN_*(M,dg)=H_*(M,\mathbb{Z}).\]
\end{Pro}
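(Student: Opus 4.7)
The plan is essentially to unwind the definitions and observe that the local coefficient system $\mathcal{L}_\beta$ depends only on the de Rham cohomology class of $\beta$, not on the representative. Concretely, I would first note that the set $\Gamma_\beta=\{\langle[\beta],h(\gamma)\rangle:\gamma\in\pi_1(M)\}$ is by definition a function of $[\beta]$ alone, so $[\beta]=[\beta']$ immediately yields $\Gamma_\beta=\Gamma_{\beta'}$. Second, the morphism $\phi\colon\Z[\pi_1(M)]\to Nov(\Gamma_\beta)$ sending $\gamma$ to $t^{\langle[\beta],h(\gamma)\rangle}$ is manifestly expressed in terms of $[\beta]$, so the same equality gives the identity of morphisms $\phi_\beta=\phi_{\beta'}$. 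As $\mathcal{L}_\beta$ is defined via $\phi$, this shows $\mathcal{L}_\beta=\mathcal{L}_{\beta'}$ as local systems. The isomorphism $HN_*(M,\beta)\simeq HN_*(M,\beta')$ then follows directly from the definition $HN_*(M,\beta)=H_*(M,\mathcal{L}_\beta)$.

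For the second assertion, I would start by applying Stokes' theorem: for any loop $\gamma\in\pi_1(M)$ and any $g\in C^\infty(M)$,
\[\langle[dg],h(\gamma)\rangle=\int_\gamma dg=0.\]
Therefore $\Gamma_{dg}=\{0\}$. I would then verify that $Nov(\{0\})\simeq\Z$: this is the one subtlety worth checking, since it uses the convergence condition in the definition of the Novikov ring. When $\Gamma=\{0\}$, every exponent $\gamma_i$ is forced to be $0$, so the condition $\lim\gamma_i=-\infty$ can only be satisfied when all but finitely many coefficients vanish; thus $Nov(\{0\})$ reduces to finite $\Z$-linear combinations of $t^0$, i.e.\ to $\Z$. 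Under this identification, $\phi$ becomes the augmentation $\gamma\mapsto 1$, so $\mathcal{L}_{dg}$ is the trivial $\Z$-local system. Consequently $HN_*(M,dg)=H_*(M,\Z)$.

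Honestly, there is no serious obstacle here: both claims reduce to a careful reading of the definitions of $\Gamma_\beta$, $\phi$, and $\mathcal{L}_\beta$. The only place where one must pause is the identification $Nov(\{0\})\simeq\Z$, which relies on the tail condition in the definition of the Novikov ring. Everything else is a formal consequence of the fact that Morse-Novikov homology was defined as a local coefficient homology whose datum involves only the pairing of $[\beta]$ with $\pi_1(M)$.
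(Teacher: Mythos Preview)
Your argument is correct, and in fact the paper does not supply its own proof of this proposition: it is stated as a citation from Farber's book, so there is nothing to compare against beyond noting that your unwinding of the definitions is exactly the intended (and essentially only) route. The observation that $\Gamma_\beta$, $\phi$, and hence $\mathcal{L}_\beta$ are literally functions of $[\beta]$ rather than $\beta$ is the whole content of the first assertion.

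One small caveat on the second assertion: with the paper's particular formulation of $Nov(\Gamma)$ as infinite sums $\sum a_i t^{\gamma_i}$ subject to $\lim\gamma_i=-\infty$, the case $\Gamma=\{0\}$ is mildly degenerate, since a sequence identically equal to $0$ does not tend to $-\infty$. Your conclusion $Nov(\{0\})\simeq\Z$ is of course the right one under any standard definition of the Novikov ring (formal sums with the support condition that $\{\gamma: a_\gamma\neq 0,\ \gamma>c\}$ is finite for every $c$), and this is clearly what the paper intends; but strictly speaking your sentence ``the condition $\lim\gamma_i=-\infty$ can only be satisfied when all but finitely many coefficients vanish'' does not follow from the letter of the paper's definition. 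It would be cleaner to phrase this as: under the usual support condition for Novikov series, $\Gamma=\{0\}$ forces a single term $a_0t^0$, hence $Nov(\{0\})=\Z$. This is a wording issue, not a mathematical gap.
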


Finally, let us give two useful property of Morse-Novikov homology, the second of which we have hinted at earlier.

\begin{Pro}[\cite{Farber2004TopologyOC}]\label{PropInegM}
	Let $\beta$ be a closed  $1$-form on a connected closed manifold $M$, then \[rank(HN_i(M,\beta))\leq rank(H_i(M,\mathbb{Z})).\]
	
	Moreover, assume that $\beta$, seen as a section in $T^*M$, intersects the $0$ section transversely. Since $\beta=d_\beta(-1)$, we can define the set $Crit_j(\beta)$ of critical points of index $j$ of $\beta$ as the set of $\beta$-critical points of index $j$ of the constant map equal to $-1$. Then we have the lower bound: \[\# Crit_i(\beta)\geq\sum_irank(HN_i(M,\beta))+q_i(M,\beta)+q_{i-1}(M,\beta),\] where $q_i(M,\beta)$ is the minimal number of generators of the torsion subgroup of $rank(HN_i(M,\beta))$, for every $i$.
\end{Pro}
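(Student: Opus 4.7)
The plan is to prove the two inequalities separately, both by reducing to algebraic comparisons involving the Novikov ring. A convenient starting point is a CW decomposition of $M$: the cellular chain complex $C_*(\tilde M)$ of the universal cover is a finitely generated free $\mathbb{Z}[\pi_1(M)]$-module in each degree, so that $H_*(M,\mathbb{Z})$ is computed by $C_*(\tilde M)\otimes_{\mathbb{Z}[\pi_1(M)]}\mathbb{Z}$ while $HN_*(M,\beta)$ is computed by $C_*(\tilde M)\otimes_{\mathbb{Z}[\pi_1(M)]}Nov(\Gamma_\beta)$ through $\phi$. Both resulting complexes have the same number of free generators in each degree, so the inequalities reduce to purely homological-algebraic statements.

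For the first inequality, I would pass to the field of fractions $K$ of $Nov(\Gamma_\beta)$, viewed as a domain. Tensoring the Novikov chain complex with $K$ yields a complex of $K$-vector spaces whose homology has $K$-dimension equal to $rank(HN_i(M,\beta))$. The analogous construction with $\mathbb{Q}$ computes $H_i(M,\mathbb{Q})$, whose dimension equals $rank(H_i(M,\mathbb{Z}))$. Since both complexes arise from the same $\mathbb{Z}[\pi_1(M)]$-complex, but only the Novikov version carries a non-trivial twist, a direct comparison argument shows that the twist can only decrease the rank of the resulting homology, which yields the bound.

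For the Morse-type inequality, I would use Novikov's original gradient description of $HN_*(M,\beta)$, briefly alluded to at the start of the section: with a generic metric, form a chain complex freely generated over $Nov(\Gamma_\beta)$ by the $\beta$-critical points of $-1$, graded by index, with differential counting negative gradient trajectories lifted to the integral cover $\tilde M_\beta$ and weighted by $t^{\text{height}}$. Its homology is $HN_*(M,\beta)$ (Farber's equivalence) and its rank in degree $i$ equals $\# Crit_i(\beta)$. Provided $Nov(\Gamma_\beta)$ is a PID—which holds when $\Gamma_\beta$ is cyclic and can be arranged more generally by enlarging scalars—the structure theorem for finitely generated modules over a PID, applied to this free complex, yields the classical algebraic Morse bound $rank(C_i)\geq rank(H_i)+q_i+q_{i-1}$: each torsion generator of $H_i$ requires one generator of $C_i$ and one of $C_{i+1}$. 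Substituting $C_i$ for the Novikov chain complex gives the stated inequality.

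The main obstacles I anticipate are twofold: rigorously carrying out the rank comparison for the first inequality (where the non-trivial twist by $\phi$ must be handled without confusing the $\mathbb{Z}[\pi_1(M)]$-module structure with the $Nov(\Gamma_\beta)$-module structure), and verifying the PID hypothesis on $Nov(\Gamma_\beta)$ in the generality required for the second. Both are standard in the Novikov literature but genuinely technical; I would expect to quote them from Farber rather than redo them from scratch.
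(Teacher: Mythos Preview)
The paper does not prove this proposition; it is quoted from Farber's book as background and carries no argument in the text. So there is nothing in the paper to compare your sketch against, and what follows measures it against the standard argument.

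Your outline for the Morse-type inequality is the standard one and is essentially correct. The only unnecessary hedge is on the PID hypothesis: $Nov(\Gamma)$ with integer coefficients is a PID for \emph{every} subgroup $\Gamma\subset\R$ (this is in Farber, and is the whole reason the torsion numbers $q_i$ are well defined in the first place), so no restriction to cyclic $\Gamma_\beta$ or enlargement of scalars is needed.

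The first inequality, by contrast, has a genuine gap. Noting that the two complexes $C_*(\tilde M)\otimes_{\Z[\pi_1]}\Z$ and $C_*(\tilde M)\otimes_{\Z[\pi_1]}Nov(\Gamma_\beta)$ have the same number of free generators in each degree says nothing by itself about the direction of any rank comparison in homology: the differentials are different matrices, and rank can go either way a priori. The missing mechanism is semicontinuity of matrix rank under specialisation. Working over $\Z[\mathcal H]$ with $\mathcal H=H_1(M)/\ker\langle[\beta],\cdot\rangle$, each differential is a matrix of Laurent polynomials; its rank over the fraction field $Q_\beta$ is at least its rank after the augmentation $\mathcal H\to 1$ (a minor that is nonzero after setting all variables to $1$ was already a nonzero polynomial), and subtracting these ranks from the common rank of the chain groups gives the Betti-number inequality. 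Your phrase ``a direct comparison argument shows that the twist can only decrease the rank'' is the conclusion, not the argument.
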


Let us now be a bit more precise with the link between Lichnerowicz cohomology and Morse-Novikov. We have the following lemma, due to A. V. Pazhitnov:
\begin{The}[\cite{Pazhitnov1987APRPNI}]\label{LemPazhitnov1987}
	Assume that $M$ is closed and that the periods of $\beta$ are commensurable. Then, for every $t\in\mathbb{R}$ big enough, and for every $i$,\[dim(H_{t\beta}^i(M))=rank_{Nov(\Gamma_\beta)}(HN^i(M,\beta)).\]
	Note that we are using Morse-Novikov cohomology.
	
	Whenever this equality doesn't hold, the difference between the left hand side and right hand side of this equation is at most the sum of the torsion numbers of the $\mathbb{Q}[\mathcal{H}]$-modules $H_k(\tilde{M}_\beta,\mathbb{Q})$ and $H_{k-1}(\tilde{M}_\beta,\mathbb{Q})$, where $\mathcal{H}$ is the group of deck transformations. Here, $\tilde{M}_\beta$ is the integral cover of $\beta$.
\end{The}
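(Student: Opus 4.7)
The plan is to translate both sides of the asserted equality into invariants of the $\mathbb{Q}[\mathcal{H}]$-module $H_*(\tilde{M}_\beta;\mathbb{Q})$, exploiting that commensurability forces $\mathcal{H}$ to be cyclic. After rescaling $\beta$, one may assume $[\beta]$ is integral with period group exactly $\mathbb{Z}$, so $\mathcal{H}\cong\mathbb{Z}$ is generated by a single deck transformation $T$, and on $\tilde{M}_\beta$ one has $\pi^*\beta=dg$ with $T^*g=g+c$ for some $c\neq 0$.

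The gauge identity $d(e^{-tg}\alpha)=e^{-tg}\,d_{t\pi^*\beta}\alpha$ on $\tilde{M}_\beta$ shows that multiplication by $e^{-tg}$ is an isomorphism of complexes $(\Omega^*(\tilde{M}_\beta),d_{t\pi^*\beta})\to(\Omega^*(\tilde{M}_\beta),d)$. Combined with the identification of forms on $M$ with $T$-invariant forms on $\tilde{M}_\beta$, this identifies $(\Omega^*(M),d_{t\beta})$ with the twisted de Rham complex $\Omega^*(M;\mathcal{L}_\rho)$ of the flat real line bundle associated to the character $\rho:\pi_1(M)\to\mathbb{R}^*$, $\gamma\mapsto e^{-t\int_\gamma\beta}$. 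Since $\rho$ factors through $\mathcal{H}$, the Wang-type long exact sequence for the infinite cyclic cover $\tilde{M}_\beta\to M$ yields, with $\lambda:=e^{-tc}$,
\[\dim H^i_{t\beta}(M)=\dim\mathrm{coker}\bigl(T-\lambda\bigr)\big|_{H_i(\tilde{M}_\beta;\mathbb{Q})}+\dim\ker\bigl(T-\lambda\bigr)\big|_{H_{i-1}(\tilde{M}_\beta;\mathbb{Q})}.\]

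Writing $\Lambda=\mathbb{Q}[T,T^{-1}]$ and applying the structure theorem for finitely generated modules over this PID gives a decomposition $H_i(\tilde{M}_\beta;\mathbb{Q})\cong\Lambda^{b_i}\oplus\bigoplus_j\Lambda/(p_{ij})$ for finitely many nonzero Laurent polynomials $p_{ij}$. A direct computation then gives $\dim\mathrm{coker}(T-\lambda)=b_i+\#\{j:p_{ij}(\lambda)=0\}$ and $\dim\ker(T-\lambda)=\#\{j:p_{ij}(\lambda)=0\}$. On the Novikov side, the localization $Nov(\Gamma_\beta)\otimes_{\mathbb{Z}}\mathbb{Q}$ is a field extension of $\Lambda$ (namely $\mathbb{Q}((t^{-1}))$) in which every $p_{ij}$ becomes invertible; hence tensoring kills all $\Lambda$-torsion and $\mathrm{rank}_{Nov(\Gamma_\beta)}HN^i(M,\beta)=b_i$.

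Putting these together, the difference $\dim H^i_{t\beta}(M)-\mathrm{rank}_{Nov(\Gamma_\beta)}HN^i(M,\beta)$ equals the number of $p_{ij}$ and $p_{(i-1)j}$ vanishing at $\lambda$. For $t$ large enough, $\lambda=e^{-tc}$ avoids the finitely many roots of all these polynomials, so equality holds; otherwise the discrepancy is bounded by the total number of torsion generators of $H_i(\tilde{M}_\beta;\mathbb{Q})$ and $H_{i-1}(\tilde{M}_\beta;\mathbb{Q})$, as claimed. The main technical obstacle I expect is justifying the Wang-type exact sequence in the present smooth de Rham setting (relating the twisted cohomology $H^*(M;\mathcal{L}_\rho)$ to the $\Lambda$-module $H_*(\tilde{M}_\beta;\mathbb{Q})$) and carefully bookkeeping the eigenspace--coinvariant duality so that degrees $i$ and $i-1$ appear on the correct sides; once those are in place, the remaining algebra is routine.
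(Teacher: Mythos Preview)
Your proposal is correct and follows essentially the same route the paper sketches: the paper deduces the result from Pazhitnov's companion theorem (identifying Lichnerowicz cohomology with local-coefficient cohomology for the character $\gamma\mapsto e^{-\langle[\beta],\gamma\rangle}$) together with the universal coefficient theorem over $\Z[t,t^{-1}]$, and your gauge transformation plus Wang/Milnor sequence is an explicit unpacking of exactly those two ingredients. The subsequent remark in the paper performs the same $\Lambda$-module bookkeeping you do, locating the discrepancy in $\mathrm{Ext}^1$ over $\Z[t,t^{-1}]$, which matches your count of torsion summands on which $T-\lambda$ fails to be invertible.
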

Do note that the above equality is indeed false in some cases, as shown in example 4.2 in A. Moroianu and M. Pilca's 2021 paper (\cite{Moroianu2021}): while related, Lichnerowicz cohomology and Morse-Novikov homology are not the dual of one another. Indeed, in \cite{Moroianu2021}, the authors build a closed nowhere-vanishing $1$-form on a manifold such that its Lichnerowicz cohomology is not $0$. However, since the $1$-form is nowhere-vanishing, the associated Morse-Novikov homology is $0$. This also shows that Lichnerowicz cohomology is less suited at counting the critical points of a closed $1$-form $\beta$ (i.e. the $\beta$-critical points of $d_\beta(-1)=\beta$) than Morse-Novikov (co)homology.

The theorem \ref{LemPazhitnov1987} is a consequence of a more general statement that can also be found in \cite{Pazhitnov1987APRPNI} :

\begin{The}[\cite{Pazhitnov1987APRPNI}] For this theorem, we do not need to assume that $M$ is closed. 
The Lichnerowicz cohomology associated to $-\beta$ is isomorphic to the cohomology with local coefficients in the local system given by the morphism of $\R$-vector space:
\begin{align*}
	\phi\,:\, \mathbb{R}[\pi_1(M)]\rightarrow & \R\\
	\gamma\in\pi_1(M)\mapsto & e^{-<[\beta],h(\gamma)>}
\end{align*}
\end{The}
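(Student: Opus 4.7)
The plan is to pass to the universal cover of $M$, where the closed form $\beta$ becomes exact, and to exhibit an explicit multiplication-by-exponential isomorphism between the Lichnerowicz complex and a twisted de Rham complex on the cover. Let $\pi:\tilde M\to M$ be the universal cover and let $g\in C^\infty(\tilde M)$ be a primitive of $\pi^*\beta$, so $dg=\pi^*\beta$. The starting point is the direct calculation
\begin{equation*}
d(e^g\alpha)\;=\;e^g\bigl(dg\wedge\alpha+d\alpha\bigr)\;=\;e^g\bigl(\pi^*\beta\wedge\alpha+d\alpha\bigr)\;=\;e^g\,d_{-\pi^*\beta}\alpha,
\end{equation*}
valid for every $\alpha\in\Omega^*(\tilde M)$. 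Thus multiplication by $e^g$ is an isomorphism of cochain complexes $(\Omega^*(\tilde M),d_{-\pi^*\beta})\xrightarrow{\sim}(\Omega^*(\tilde M),d)$.

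Next, I would track how this isomorphism interacts with the natural $\pi_1(M)$-action by deck transformations. Since $g$ is a primitive of $\pi^*\beta$, one has $g\circ\gamma-g=\langle[\beta],h(\gamma)\rangle$ for every $\gamma\in\pi_1(M)$. Consequently, for any $\alpha_0\in\Omega^*(M)$,
\begin{equation*}
\gamma^*\bigl(e^g\pi^*\alpha_0\bigr)\;=\;e^{g\circ\gamma}\pi^*\alpha_0\;=\;e^{\langle[\beta],h(\gamma)\rangle}\bigl(e^g\pi^*\alpha_0\bigr).
\end{equation*}
Conversely, any form $\tilde\omega$ on $\tilde M$ satisfying $\gamma^*\tilde\omega=e^{\langle[\beta],h(\gamma)\rangle}\tilde\omega$ for all $\gamma$ arises in this way, since $e^{-g}\tilde\omega$ is then $\pi_1(M)$-invariant and hence descends to a form on $M$. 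Therefore, multiplication by $e^g$ realizes a canonical isomorphism between the Lichnerowicz complex $(\Omega^*(M),d_{-\beta})$ and the subcomplex of $(\Omega^*(\tilde M),d)$ consisting of forms that transform under deck transformations by the character $\gamma\mapsto e^{\langle[\beta],h(\gamma)\rangle}$.

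To conclude, I would invoke the standard de Rham description of cohomology with local coefficients: such $\pi_1(M)$-equivariant forms on $\tilde M$ are precisely a cochain model for the cohomology of $M$ with values in the associated local system. Matching characters (with the usual duality convention that relates equivariance on the cover to the local system on the base) identifies this local system with the one defined by $\phi:\gamma\mapsto e^{-\langle[\beta],h(\gamma)\rangle}$, yielding the desired isomorphism $H^*\bigl(\Omega^*(M),d_{-\beta}\bigr)\cong H^*(M;\mathcal{L}_\phi)$.

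The argument is essentially driven by the identity $d(e^g\alpha)=e^g\,d_{-\pi^*\beta}\alpha$, with the rest being bookkeeping for the $\pi_1(M)$-action; the only subtle point is the last step, where one must pay attention to the standard dual/inverse convention relating an equivariance character on the universal cover to the character of the local system on the base. Once that convention is fixed, the statement follows immediately from the classical de Rham theorem with local coefficients.
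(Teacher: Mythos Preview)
The paper does not give its own proof of this statement: it is quoted as a theorem of Pazhitnov with a citation to \cite{Pazhitnov1987APRPNI}, and the surrounding text only explains how it implies the preceding theorem via the universal coefficient theorem. So there is nothing in the paper to compare your argument against.

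That said, your argument is the standard one and is correct. The identity $d(e^g\alpha)=e^g\,d_{-\pi^*\beta}\alpha$ does exactly what you say, and the computation $g\circ\gamma-g=\langle[\beta],h(\gamma)\rangle$ correctly identifies the equivariance character of $e^g\pi^*\alpha_0$. The only point worth tightening is the last paragraph: rather than appealing to an unspecified ``duality convention'', you could simply state which convention you are using for the local system (e.g.\ sections of $\tilde M\times_{\pi_1(M)}\mathbb{R}$ with the diagonal action $\gamma\cdot(x,v)=(\gamma x,\phi(\gamma)v)$), and then the inverse in $\phi(\gamma)=e^{-\langle[\beta],h(\gamma)\rangle}$ drops out of the definition of equivariant section rather than being a matter of taste. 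With that made explicit, the proof is complete.
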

Indeed, whenever $\Z[<e^g:g\in\Gamma_\beta>]$ is isomorphic as a $\Z[\pi_1(M)]$-module to $\Z[t,t^{-1}]$, the universal coefficient theorems directly implies theorem \ref{LemPazhitnov1987}. 

\begin{Rem}Let $M$ be a manifold, not necessarily closed, and take $\beta\in\Omega^1(M)$ closed such that $[\beta]\in H^1_{dR}(M)\cap H^1(M,\Z)$. Take $\tilde{M}_\beta$ the integral cover of $\beta$ (that is to say the minimal cover on which the pullback of $\beta$ is exact). Then the Morse-Novikov homology is the homology of $C_*(\tilde{M}_\beta)\otimes_{\Z[t,t^{-1}]}\Z[\![t]$ and the Lichnerowicz cohomology is isomorphic to the cohomology of $Hom_{\Z[t,t^{-1}]}(C_*(\tilde{M}_\beta,\R),\R)$.  

Take $S=\{t^r+\sum_{k\in\Z,\,k<r}a_kt^k:r\in\R\textit{ and }\forall k,\,a_k\in\Z\}\cap\Z[t,t^{-1}]$ the set of Laurent polynomials with leading coefficient $1$. As a general property of the Novikov ring, $\Z[\![t]$ is a flat module over the localization $R=S^{-1}\Z[t,t^{-1}]$ and, therefore, Morse-Novikov homology in essence computes $S^{-1}H_*(\tilde{M}_\beta,\Z)\simeq H_*(\tilde{M}_\beta,\Z)\otimes R$ (in as much as the torsion numbers and the ranks are the same).
	
Since $e$ is transcendental, $\Z[<e^g:g\in\Gamma_\beta>]$ is indeed isomorphic to $\Z[t,t^{-1}]$ and its field of fractions $Q$ can be viewed as a subfield of $\R$, and $\R$ is injective as a $Q$-module. Therefore, the Lichnerowicz cohomology, in essence, computes the rank of $Hom_{\Z[t,t^{-1}]}(H_{*}(\tilde{M}_\beta,\Z),\Z[t,t^{-1}])$.

Note that by general properties of the localization for finitely presented modules, \[S^{^-1}Hom_{\Z[t,t^{-1}]}(H_{*}(\tilde{M}_\beta,\Z),\Z[t,t^{-1}])\simeq Hom_{R}(S^{^-1}H_{*}(\tilde{M}_\beta,\Z),R).\] Therefore the torsion-free part of $S^{^-1}H_*(\tilde{M}_\beta,\Z)$ has the same rank (as an $R$-module) as $Hom_{\Z[t,t^{-1}]}(H_{*}(\tilde{M}_\beta,\Z),\Z[t,t^{-1}])$ (as a $\Z[t,t^{-1}]$-module). This implies that the main difference between the Morse-Novikov and Lichnerowicz cohomologies (in our case) comes from \[Ext_1^{R}(S^{-1}H_{*-1}(\tilde{M}_\beta,\Z),R)\simeq S^{-1}Ext_1^{\Z[t,t^{-1}]}(H_{*-1}(\tilde{M}_\beta,\Z),\Z[t,t^{-1}]),\] which gives the torsion submodule of $S^{-1}H_{*-1}(\tilde{M}_\beta,\Z)$.
\begin{flushright}
	\textit{ End of the remark.}
\end{flushright}
\end{Rem}

We can also use some elements of the previous remark to show that Lichnerowicz cohomology is somewhat less suited for providing a lower bound for the number of $\beta$-critical points of a given index even in simple cases.

\begin{Exe}
 	Using the previous remark, take $\Sigma_2$ a genus $2$ surface together with a basis of its first cohomology group. Take $[d\theta]$ some element of the basis and $d\theta$ a $1$-form representing it. If we call $\beta$ the pullback of $d\theta$ to $M=\R P^3\times\Sigma_2$, then the first homology group of $\tilde{M}_\beta$ is isomorphic to $\Z[t,t^{-1}]\oplus(\nicefrac{\Z}{2\Z})[t,t^{-1}]$, and therefore the first Lichnerowicz cohomology group is of rank $1$ (without torsion), while the first Morse-Novikov homology group is of rank $1$, with torsion number given by that of the group $S^{-1}(\nicefrac{\Z}{2\Z}[t,t^{-1}])$, which is generated by one element as an $R$-module.
\end{Exe}

Finally, one may notice that the definition of the Morse-Novikov homology is not too dissimilar to that of the Novikov-Sikorav homology, introduced by J.-C. Sikorav in \cite{Sikorav1987}, who was the first to consider variants of this homology with non-abelian coefficients. Those two homologies can be described as the homology of the chain complex with local coefficients in the ring $\mathcal{R}$, where the morphism \[\rho:\mathbb{Z}[\pi_1(M)]\rightarrow\mathcal{R}\] is such that, for every square matrices $A$ whose entries  $\gamma_{i,j}\in\mathbb{Z}[\pi_1(M)]$ verify $<[\beta],\gamma_{i,j}><0$, $\rho(Id+A)$ is invertible. However, Sikorav's ring is \[\widehat{Z\pi_\beta}=\bigcup_{c\in\mathbb{R}}\left\{\sum n_i\gamma_i\,:\,\gamma_i\in\pi_1(M),n_i\in\mathbb{Z},<[\beta],\gamma_i><c\right\}.\] While this ring is similar to the Novikov ring, notable differences include not being abelian. Both are special cases of  the universal chain complex described by Farber for the first time in \cite{Farber1999MNCPT} (see \cite{Farber2004TopologyOC} for more details). We should point out that both Sikorav and Farber's chain complexes have coefficients in a non-abelian ring, preventing the arguments laid out in the rest of this paper from applying to those homology theories.

\section{A Morse theory for $\beta$-critical points}\label{sec3}

\begin{Not}
	For the rest of this paper, $M$ will be a connected compact manifold.
\end{Not}

As stated in the introduction, we will first prove the following theorem, which is a special case of theorem \ref{thm1} (the case $m=0$).
\begin{The}\label{thmImproveChantraine-Murphy}
	Let $\beta$ be a closed $1$-form on $M$ and $f:M\rightarrow\mathbb{R}$ be a $\beta$-Morse map, then
	\[\# Crit^\beta_{i}(f)\geq rank(HN_i(M,\beta))\]
\end{The}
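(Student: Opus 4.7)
The plan is to pass to the integral cover $\pi\colon\tilde{M}_\beta\to M$, whose deck group is $\mathcal{H}=\Gamma_\beta$. Let $g$ be a primitive of $\pi^*\beta$ on $\tilde{M}_\beta$ and set $F=f\circ\pi$. By the definition of $\beta$-Morse, $h:=e^{-g}F$ is a Morse function on $\tilde{M}_\beta$ whose critical points are exactly $\pi^{-1}(Crit^\beta(f))$, with matching indices. Under the deck action we have $h\circ\tau=e^{-\langle[\beta],\tau\rangle}h$; although $h$ itself is not deck-invariant, the rescaling is by a positive constant, so $\mathcal{H}$ permutes the critical points of $h$ and acts on the space of unparameterized trajectories of $-\nabla h$ for any deck-invariant metric.

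From this I would build a Morse-Novikov-type chain complex $(C_*,\partial)$ over $Nov(\Gamma_\beta)$, where $C_i$ is free on $Crit_i^\beta(f)$ and hence of rank $\#Crit_i^\beta(f)$. Choosing one lift $\tilde{x}$ of each $\beta$-critical point $x$ and a generic lifted Morse-Smale metric, the differential is $\partial\tilde{x}=\sum_{y,\tau}n(\tilde{x},\tau\tilde{y})\, t^{\langle[\beta],\tau\rangle}\tilde{y}$, where $n(\tilde{x},\tau\tilde{y})\in\mathbb{Z}$ counts algebraically the negative gradient trajectories of $h$ between the two lifts of Morse index difference one. Novikov compactness --- the fact that only finitely many trajectories can have bounded change of $g$ below a given threshold --- ensures that the coefficients lie in $Nov(\Gamma_\beta)$. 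The central claim is $H_*(C_*,\partial)\simeq HN_*(M,\beta)$ as $Nov(\Gamma_\beta)$-modules: the unstable manifolds of $h$ give an $\mathcal{H}$-equivariant CW-type decomposition of $\tilde{M}_\beta$, so the associated $\mathbb{Z}[\mathcal{H}]$-Morse complex computes $H_*(\tilde{M}_\beta)$, and extending scalars through $\phi\colon\mathbb{Z}[\mathcal{H}]\to Nov(\Gamma_\beta)$ recovers the Morse-Novikov chain complex of $M$. The inequality $\#Crit_i^\beta(f)\geq\mathrm{rank}(HN_i(M,\beta))$ is then the tautological bound $\mathrm{rank}\, H_i(C_*)\leq\mathrm{rank}\, C_i$.

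The main obstacle is the mismatch with the standard Morse-Novikov picture: classical arguments (as in Farber's book) treat a primitive $g$ of a closed $1$-form with additive deck behaviour $g\circ\tau=g+c_\tau$, whereas here $h$ scales multiplicatively. The reason the construction still recovers $HN_*(M,\beta)$ is that the Morse content of a trajectory --- its zero locus, its oriented flow line and the homotopy class of its $M$-projection --- is invariant under positive rescaling of $h$, so that trajectories are correctly weighted by $t^{\langle[\beta],\tau\rangle}$ rather than by the actual change of $h$ along the lift. Verifying this weighting on the non-compact cover, checking convergence in $Nov(\Gamma_\beta)$, and showing invariance under generic perturbations of the data form the main technical content, and are presumably where the author organises the argument in a form transferable to the sheaf-theoretic setting announced in the introduction.
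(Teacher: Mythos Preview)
Your approach is natural, but the ``Novikov compactness'' step hides a genuine obstruction rather than a technicality. Along a $(-\nabla h)$-trajectory only $h$ is monotone, not $g$; since $h(\tau\tilde y)=e^{-c_\tau}h(\tilde y)$, the inequality $h(\tau\tilde y)<h(\tilde x)$ gives, when $f(x),f(y)>0$, only a \emph{lower} bound on $c_\tau$, exactly the wrong direction for the coefficients to land in $Nov(\Gamma_\beta)$ as defined in the paper. Worse, if $f$ changes sign (which cannot be normalised away, since $d_\beta(f+C)=d_\beta f-C\beta$ moves the critical set), no monotone quantity controls $c_\tau$ at all. Concretely, the vector field on $M$ dual to $d_\beta f$ may have closed orbits; their lifts are $(-\nabla h)$-trajectories escaping to infinity with $h\to 0$, so the unstable manifolds of $h$ do not cover $\tilde M_\beta$ and there is no $\mathcal{H}$-equivariant CW decomposition to identify with the cellular complex. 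The rescaling invariance you invoke does make the \emph{set} of trajectories deck-equivariant, but it does nothing for finiteness or for $\partial^2=0$.

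The paper sidesteps all of this by never building a chain complex over $Nov(\Gamma_\beta)$. Instead it works on compact pieces $W_k\subset\tilde M_\beta$ obtained from a fundamental domain, proves the algebraic inequality $(2k+1)^{r+1}\,\mathrm{rank}\,HN_i(M,\beta)\le \mathrm{rank}\,H_i(W_k,\partial_-W_k)$ (Section~\ref{rank}), and then applies ordinary Morse theory for cobordisms with boundary to $(e^{-g}f)|_{W_k}$ after an explicit extension across $\partial W_k$ that adds only $O(k^r)$ critical points (Section~\ref{relevéfonction}). Dividing by $(2k+1)^{r+1}$ and letting $k\to\infty$ kills the boundary contribution and yields the bound. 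This asymptotic/exhaustion argument is precisely what replaces the missing compactness on the cover.
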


The proof of this theorem will make heavy use of the following proposition, shown by Farber:
\begin{Pro}(Proposition 1.30, in \cite{Farber2004TopologyOC})
	Let $\beta$ be a closed $1$-form on $M$ and $\tilde{M}_\beta$ be the integral cover of $\beta$. Define \[L_\beta:=\mathbb{R}[H_1(M)/ker(<[\beta],\cdot>)]\] and let $Q_\beta$ be the field of fractions of $L_\beta$.
	
	Then, for every $i$, we have: \[rank(HN_i(M,\beta))=rank(Q_\beta\otimes_{L_\beta}H_i(\tilde{M}_\beta,\mathbb{R})).\]
\end{Pro}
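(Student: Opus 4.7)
The plan is to identify both sides as the dimension of a single vector space over the real Novikov field $Nov_\R(\Gamma_\beta)$, by recognizing this ring as a field extension sitting above both $L_\beta$ and its fraction field $Q_\beta$. To begin, I unpack the local-coefficient definition of Morse-Novikov homology: since the morphism $\phi\colon\pi_1(M)\to Nov(\Gamma_\beta)^\times$ factors through $G := H_1(M)/\ker\langle[\beta],\cdot\rangle$, and $\tilde{M}_\beta$ is precisely the $G$-cover of $M$, the standard description of homology with local coefficients yields
\[
HN_i(M,\beta) = H_i\bigl(C_*(\tilde{M}_\beta)\otimes_{\Z[G]} Nov(\Gamma_\beta)\bigr).
\]
The pairing $\langle[\beta],\cdot\rangle$ identifies $G$ with $\Gamma_\beta$ by the first isomorphism theorem, and correspondingly $L_\beta = \R[G]$ embeds into the real Novikov ring $Nov_\R(\Gamma_\beta)$ (series as in the definition of $Nov(\Gamma_\beta)$ but with coefficients in $\R$). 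By the Mal'cev--Neumann theorem, $Nov_\R(\Gamma_\beta)$ is a field because $\Gamma_\beta\subset\R$ is a totally ordered abelian group; and since $L_\beta$ is a commutative integral domain (the group algebra of a torsion-free abelian group over a field of characteristic zero) with fraction field $Q_\beta$, we obtain a tower of fields
\[
L_\beta \subseteq Q_\beta \subseteq Nov_\R(\Gamma_\beta).
\]

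The main computation then runs as follows. Extending scalars preserves the rank of $HN_i(M,\beta)$, so it suffices to compute $\dim_{Nov_\R(\Gamma_\beta)}\bigl(HN_i(M,\beta)\otimes Nov_\R(\Gamma_\beta)\bigr)$. By flatness of the field extension $L_\beta\hookrightarrow Nov_\R(\Gamma_\beta)$, the tensor product commutes with homology:
\[
HN_i(M,\beta)\otimes Nov_\R(\Gamma_\beta) \cong H_i(\tilde{M}_\beta,\R)\otimes_{L_\beta} Nov_\R(\Gamma_\beta).
\]
Factoring through the intermediate field $Q_\beta$,
\[
H_i(\tilde{M}_\beta,\R)\otimes_{L_\beta} Nov_\R(\Gamma_\beta) \cong \bigl(Q_\beta\otimes_{L_\beta} H_i(\tilde{M}_\beta,\R)\bigr)\otimes_{Q_\beta} Nov_\R(\Gamma_\beta),
\]
and since tensoring with the field extension $Nov_\R(\Gamma_\beta)/Q_\beta$ preserves $Q_\beta$-dimensions, the two sides of the claimed equality are both equal to the $Nov_\R(\Gamma_\beta)$-dimension of this common space.

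The main obstacle is the very first reduction: justifying that $rank(HN_i(M,\beta))$, as a module over the integral Novikov ring $Nov(\Gamma_\beta)$, coincides with the $Nov_\R(\Gamma_\beta)$-dimension of its scalar extension. When $\Gamma_\beta\cong\Z$ this is immediate, as $Nov(\Z)$ is then a discrete valuation ring (in particular a PID) and rank is preserved by any flat ring extension; for higher-rank $\Gamma_\beta$, $Nov(\Gamma_\beta)$ need not even be noetherian, and one must interpret $rank$ as the generic rank (the dimension after inverting nonzero divisors), then invoke flatness of $Nov_\R(\Gamma_\beta)$ over $Nov(\Gamma_\beta)$ to transfer that rank across the coefficient extension. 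Once this technical point is settled, the remainder of the argument is a clean flatness-plus-field-extension calculation, with no further geometric input needed.
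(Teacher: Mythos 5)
The paper does not prove this proposition at all---it is imported verbatim as Proposition~1.30 of Farber's book---so there is no in-paper argument to compare against; your write-up supplies the standard proof of the cited fact. The argument is correct: base-changing to the real Novikov field $Nov_{\R}(\Gamma_\beta)$, which is a field by the Hahn/Mal'cev--Neumann leading-term argument and is flat both over $Nov(\Gamma_\beta)$ (it contains its fraction field) and over $L_\beta$ (through the tower $L_\beta\subseteq Q_\beta\subseteq Nov_\R(\Gamma_\beta)$), identifies both sides with the dimension of $H_i\bigl(C_*(\tilde M_\beta)\otimes_{\Z[G]}Nov_\R(\Gamma_\beta)\bigr)$, and the rank issue you flag at the end is handled exactly as you say, since $Nov(\Gamma_\beta)$ is an integral domain and generic rank is preserved under flat base change to any field extension of its field of fractions.
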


The proof of this theorem will be broken down in three main acts, with an intermezzo in subsection \ref{MorseCob} for a brief discussion about Morse theory for cobordisms ``with boundaries'' (aka. cobordisms between manifolds with boundaries). In the first subsection, we will see $L_\beta$ acts on the cover, ultimately leading to corollary \ref{CorMorseNovIneg}, which will close this subsection by establishing a lower bound on the asymptotic behavior of the homology of progressively increasing compact sets of the cover. After the intermezzo, subsection \ref{relevéfonction} will lead with a construction modifying the map $f$ of the theorem above to allow it to be studied through Morse theory (for cobordisms with boundaries). It will conclude by giving an asymptotic upper bound on how many critical points the construction adds to $f$. Our final act will be to put subsection \ref{rank} and \ref{relevéfonction} together to obtain the result in subsection \ref{lower}.

\subsection{A couple of inequalities of ranks}\label{rank}

 Let us use the beginning of Farber's construction for his universal chain complex (see \cite{Farber2004TopologyOC}). More specifically, we are interested in his construction of a ``good'' fundamental domain in a covering space of $M$.

\begin{Rem}[\cite{Farber2004TopologyOC}]
Let $\beta$ be a closed $1$-form on $M$, then there is an $r\in\mathbb{N}$ and a map \[(\psi_1,\ldots,\psi_r):M\rightarrow\mathbb{S}^1_1\times\ldots\times\mathbb{S}^1_r\] such that $[\beta]=\sum_ia_i\psi_i^{*}[d\theta_i]$ where $d\theta_i$ is the canonical generator of $H^1(\mathbb{S}^1_i)$, the $\psi_i^{*}[d\theta_i]$ are linearly independent and the $a_i$ are linearly independent over $\mathbb{Z}$. 
\end{Rem}
Moreover, due to Thom's transversality, we have:
\begin{Rem}[\cite{Farber2004TopologyOC}]
For some generic $\beta$, generic elements $c_1,\ldots,c_r\in\mathbb{S}^1$, and for every $i$, the various maps $\phi_i^{-1}(\{c_i\})$ are submanifold which intersect transversely two by two.
\end{Rem}
These consideration allow for the statement of the following definition:
\begin{Def}[\cite{Farber2004TopologyOC}]
Let $V=M-\bigcup_i\phi_i^{-1}(\{c_i\})$, take $\tilde{V}_\beta$ a preimage of $V$ in $\tilde{M}_\beta$ which is fully contained in some connected fundamental domain (with corners).
\end{Def}
\begin{figure}[H]
\centering
\includegraphics{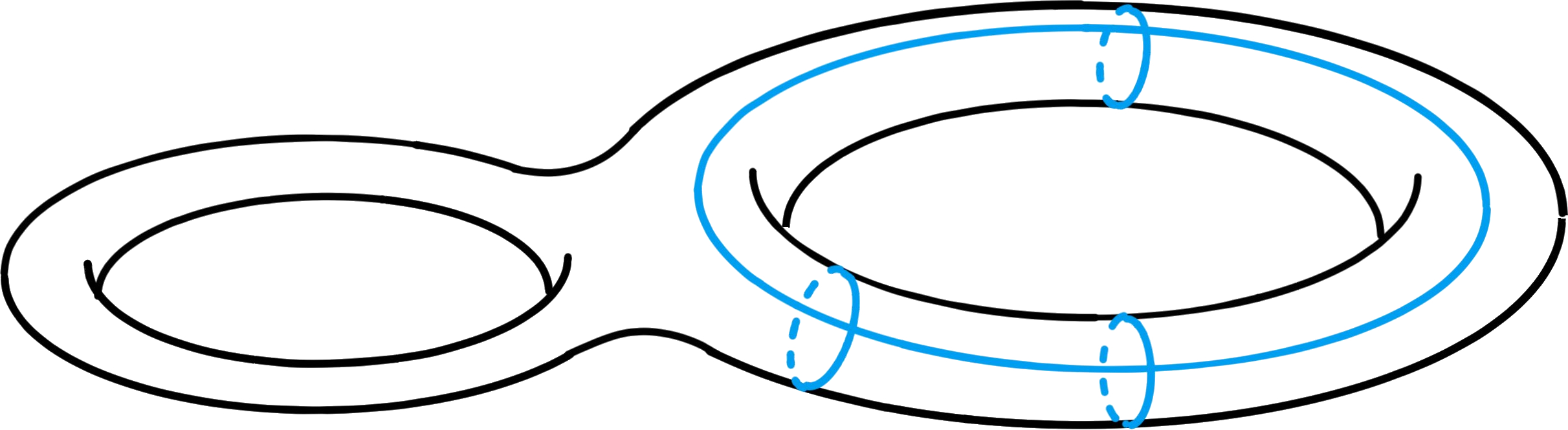}
\caption{Example of $V$ for a genus $2$ surface.}
\end{figure}

\begin{Rem}
Remark that $\overline{\tilde{V}_\beta}$, the closure of $\tilde{V}_\beta$, is a fundamental domain (give or take some boundaries) with corner. Moreover, if $g$ is a primitive of $\beta$ in $\tilde{M}_\beta$, then $\overline{\tilde{V}_\beta}$ has a positive boundary (with $dg$ exiting) $\partial_+\overline{\tilde{V}_\beta}$ and negative boundary ($dg$ entering) $\partial_-\overline{\tilde{V}_\beta}$, the positive and negative boundaries intersect along corners. Finally, observe that by construction the projection of $\partial\overline{\tilde{V}_\beta}$ in $M$ is a subset of $\bigcup_i\phi_i^{-1}(\{c_i\})$.
\end{Rem}

Endow $\tilde{M}_\beta$ with a CW structure such that $\mathcal{H}$, the automorphism group of $\tilde{M}_\beta$, acts on the structure and $\overline{\tilde{V}_\beta}$ is a sub-complex. Since $C_*^{CW}(\tilde{M}_\beta,\mathbb{R})$ is free $L_\beta$-module, we have the following equalities, partially give by the universal coefficient theorem (fore example found in theorem 3.6.1 in \cite{Weibel1994AnIHA}):
$rank(Q_\beta\otimes_{L_\beta} H_i(\tilde{M}_\beta,\mathbb{R})= rank (H_i(Q_\beta\otimes_{L_\beta} C_*^{CW}(\tilde{M}_\beta,\mathbb{R}))) $ $= rank(H_i(Q_\beta\otimes_{\mathbb{R}}C_*^{CW}(\overline{\tilde{V}_\beta},\partial_-\overline{\tilde{V}_\beta}),\hat{\delta}))$. The differential $\hat{\delta}$ 
isn't the usual one. Indeed, it behaves like the usual one for the cells in the interior of  $\overline{\tilde{V}_\beta}$, but identifies the cells $c$ in $\partial_-\overline{\tilde{V}_\beta}$ to the cells $\alpha^{-1}\otimes_\mathbb{R}\alpha(c)\in Q_\beta\otimes_{\mathbb{R}}C_*^{CW}(\overline{\tilde{V}_\beta},\partial_-\overline{\tilde{V}_\beta})$, with $\alpha(c)$ in $\partial_-\overline{\tilde{V}_\beta}$ through the action of $\mathcal{H}$. More precisely, the differential $\hat{\delta}$ can be written as $\partial+\partial'$ where $\partial$ is the usual differential, and $\partial'$ is $0$ on the cells which are not in $\partial_-\overline{\tilde{V}_\beta}$, and equal to $\alpha^{-1}\otimes\alpha(c)$ otherwise, with $\alpha(c)\subset\partial_-\overline{\tilde{V}_\beta}$. Moreover, the degree of the polynomials in $L_\beta$ induce a filtration on $L_\beta\otimes_{\mathbb{R}}C_*^{CW}(\overline{\tilde{V}_\beta},\partial_-\overline{\tilde{V}_\beta})$. This filtration, in turn, induce a spectral sequence the $E_2$ page of which is 
$L_\beta\otimes H_i(\overline{\tilde{V}_\beta},\partial_-\overline{\tilde{V}_\beta})$ and with $E_3$ page equal to $H_i(L_\beta\otimes_{\mathbb{R}}C_*^{CW}(\overline{\tilde{V}_\beta},\partial_-\overline{\tilde{V}_\beta}),\hat{\delta})$.
In particular,
\begin{align*}
	rank(H_i(\overline{\tilde{V}_\beta},\partial_-\overline{\tilde{V}_\beta}))&=rank(L_\beta\otimes H_i(\overline{\tilde{V}_\beta},\partial_-\overline{\tilde{V}_\beta}))\\
	&\geq rank(H_i(L_\beta\otimes_{\mathbb{R}}C_*^{CW}(\overline{\tilde{V}_\beta},\partial_-\overline{\tilde{V}_\beta}),\hat{\delta})).
\end{align*} 

We will now seek to generalize this observation. Let us start with a couple of definitions.

\begin{Def}
Let $(\alpha_0,\ldots,\alpha_r)$ be a basis of \[\mathcal{H}\simeq H_1(M)/ker(<[\beta],\cdot>)\] such that $<\beta,\alpha_i>\geq0$ for every $i$:
\begin{itemize}
\item Take $\mathcal{H}_k=\{\sum_{i=0}^r\lambda_i\alpha_i\,:\,\forall i,\lambda_i\in\{-k,\ldots,k\}\}$.
\item Take $W_k=\cup_{\alpha\in\mathcal{H}_k}\alpha(\overline{\tilde{V}_\beta})$. 
\item Take $\partial_+\mathcal{H}_k=\{\{\sum_{i=0}^r\lambda_i\alpha_i\in\mathcal{H}_k\,:\,\exists i/\lambda_i=k\}\}$ and $\partial_-\mathcal{H}_k=\{\sum_{i=0}^r\lambda_i\alpha_i\in\mathcal{H}_k\,:\,\exists i/\lambda_i=-k\}$. Note that those two sets are not necessarily disjoint. 
\end{itemize}
\end{Def}
\begin{Rem}
We not here that $W_k$ has a negative boundary ($dg$ enters) $\partial_-W_k$ and a positive one ($dg$ exits) $\partial_+W_k$ which intersect in corners. Then $\partial_+W_k\subset\cup_{\alpha\in\partial_+\mathcal{H}_k}\alpha(\partial_-\overline{\tilde{V}_\beta})$ and $\partial_-W_k\subset\cup_{\alpha\in\partial_-\mathcal{H}_k}\alpha(\partial_-\overline{\tilde{V}_\beta})$.
\end{Rem}
However, for simplicity's sake, we will restrict ourselves to ``negative'' automorphisms.
\begin{Def}
	Let $(\alpha_0,\ldots,\alpha_r)$ be the same basis as in the previous definition.
\begin{itemize}
\item Take $\mathcal{H}^-=\{\sum_{i=0}^r\lambda_i\alpha_i\in\mathcal{H}\,:\, \lambda_i\leq 0\}$ and $\mathcal{H}_k^-=\mathcal{H}_k\cap\mathcal{H}^-$.
\item Take $W_k^-=\cup_{\alpha\in\mathcal{H}_k^-}\alpha(\overline{\tilde{V}_\beta})$.
\item Take $\partial_-\mathcal{H}^-_k=\{\sum_{i=0}^r\lambda_i\alpha_i\in\mathcal{H}^-_k\,:\,\exists i/\lambda_i=-k\}$.
\item Take $\partial_+\mathcal{H}^-_k=\{\sum_{i=0}^r\lambda_i\alpha_i\in\mathcal{H}^-_k\,:\,\exists i/\lambda_i=0\}$.
\item $L_\beta^-:=\mathbb{R}[\mathcal{H}^-]$ and $L_\beta^k:=\mathbb{R}[\mathcal{H}^-]/I$ where $I$ is the ideal in $\mathbb{R}[\mathcal{H}^-]$ generated by the elements of $\partial_-\mathcal{H}_{k+1}^-$.
\end{itemize}
\end{Def}

\begin{Rem}\label{Remegal}
Similarly to $W_k$,we define $\partial_+W_k^-$ and $\partial_-W_k^-$. Therefore, $\partial_-W_k^-\subset\cup_{\alpha\in\partial_-\mathcal{H}^-_k}\alpha(\overline{\partial_-\tilde{V}_\beta})$ and $\partial_+W_k^-\subset\cup_{\alpha\in\partial_+\mathcal{H}^-_k}\alpha(\overline{\partial_+\tilde{V}_\beta})$. 

Thus, we can point out that $C^{CW}_*(W^-_k,\partial_-W^-_k)$ is a free $L_\beta^k$-module equal to $L_\beta^k\otimes_\mathbb{R}C_*^{CW}(\overline{\tilde{V}_\beta},\partial_-\overline{\tilde{V}_\beta})$ endowed with the differential $\hat{\delta}$. 
\end{Rem}

\begin{figure}[H]
	\centering
	\begin{overpic}[scale=0.65]{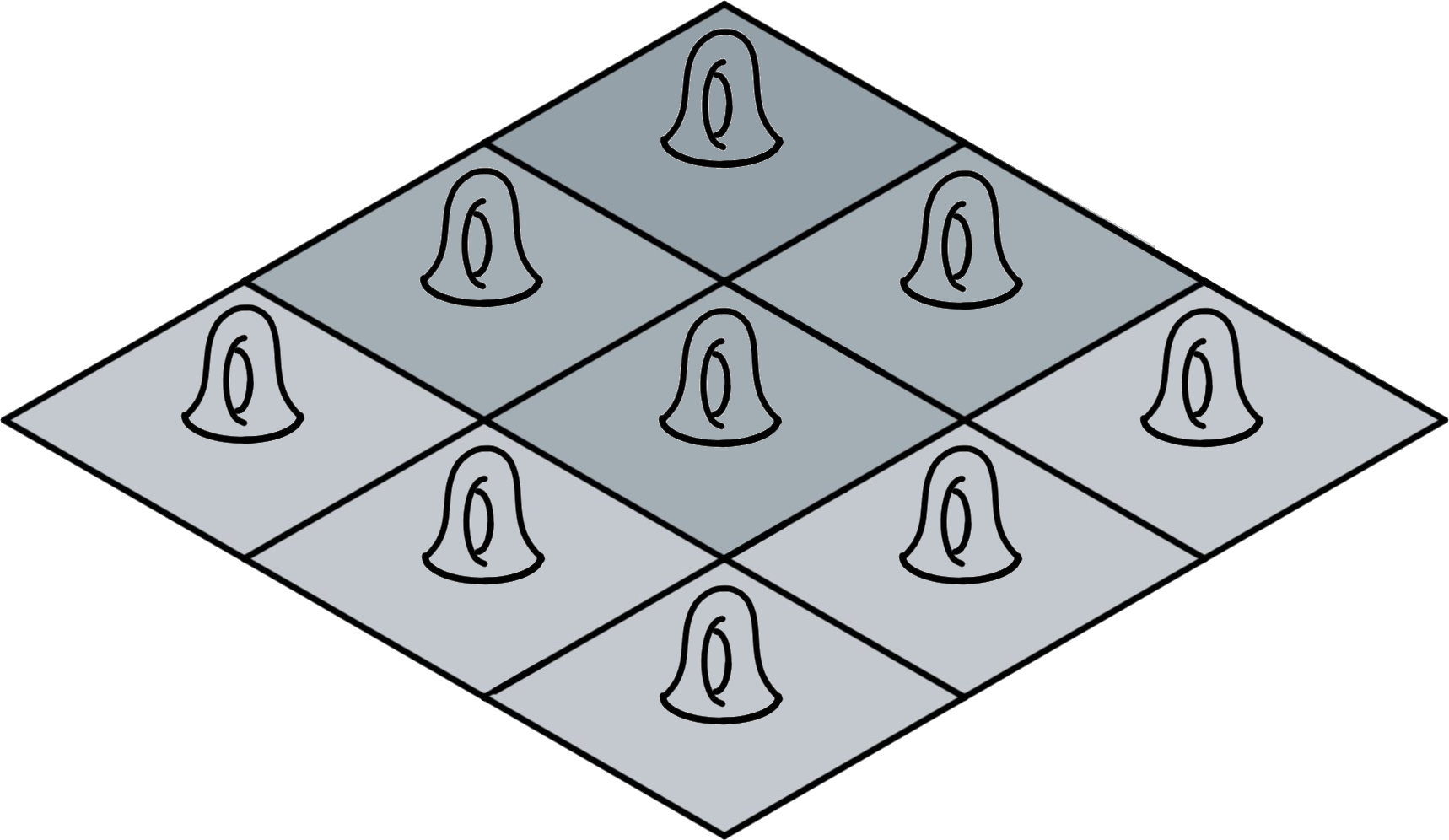}
		\put (60,52) {$W_0^-$}
		\put (76,43) {$W_1^-$}
		\put (93,33) {$W_2^-$}
	\end{overpic}
	\caption{Example of $W_0^-$, $W_1^-$ and $W_2^-$.}
\end{figure}

We can now state the following generalization of our previous observation:
\begin{Pro}
	$(k+1)^{r+1}rank(HN_i(M,\beta))\leq rank(H_i(W^-_k,\partial_-W^-_k))$, with $r+1=rank_\mathbb{Z}(\mathcal{H})$.	
\end{Pro}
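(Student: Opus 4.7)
The strategy is to re-run the spectral-sequence argument from the opening of this subsection, but one level ``up'': namely, viewing $W_k^-$ as a fundamental domain with corners for the action of the sublattice $\mathcal{H}' := (k+1)\mathcal{H} \leq \mathcal{H}$ on $\tilde{M}_\beta$, rather than $\overline{\tilde{V}_\beta}$ as a fundamental domain for the full action of $\mathcal{H}$. Set $L'_\beta := \mathbb{R}[\mathcal{H}']$ and let $Q'_\beta$ denote its fraction field.

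First I check that $W_k^-$ really plays the role of $\overline{\tilde{V}_\beta}$: since $\mathcal{H}_k^-$ is a system of coset representatives for $\mathcal{H}/\mathcal{H}'$, the translates $\{\alpha(W_k^-)\}_{\alpha \in \mathcal{H}'}$ tile $\tilde{M}_\beta$, with $\partial_\pm W_k^-$ playing the roles of $\partial_\pm \overline{\tilde{V}_\beta}$ and the hypersurfaces that cut out $\overline{\tilde{V}_\beta}$ regrouping into those interior to $W_k^-$ and those (spaced by elements of $\mathcal{H}'$) bounding it. Following the same reasoning as in the remark preceding this proposition, the CW chain complex of $\tilde{M}_\beta$ then takes the form
\[
C^{CW}_*(\tilde{M}_\beta,\mathbb{R}) \;=\; L'_\beta \otimes_{\mathbb{R}} C^{CW}_*(W_k^-,\partial_- W_k^-),
\]
equipped with a differential of the same shape as $\hat{\delta}$.

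Filtering by polynomial degree in $L'_\beta$ produces the analog of the spectral sequence used in the opening observation, with $E_2$ page $L'_\beta \otimes H_i(W_k^-,\partial_- W_k^-)$ converging to $H_i(\tilde{M}_\beta,\mathbb{R})$. Extracting $L'_\beta$-ranks gives, exactly as before,
\[
\dim_{\mathbb{R}} H_i(W_k^-,\partial_- W_k^-) \;\geq\; \mathrm{rank}_{L'_\beta}\bigl(H_i(\tilde{M}_\beta,\mathbb{R})\bigr).
\]

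It remains to convert $L'_\beta$-rank into $L_\beta$-rank. Since $L_\beta$ is a free $L'_\beta$-module of rank $[\mathcal{H}:\mathcal{H}'] = (k+1)^{r+1}$ and a domain, the tensor product $L_\beta \otimes_{L'_\beta} Q'_\beta$ is a $Q'_\beta$-algebra of dimension $(k+1)^{r+1}$ that is a domain and hence a field; as it contains $L_\beta$, it must coincide with $Q_\beta$. Thus $Q_\beta/Q'_\beta$ is a field extension of degree $(k+1)^{r+1}$, from which it follows that for any $L_\beta$-module $M$,
\[
\mathrm{rank}_{L'_\beta}(M) \;=\; \dim_{Q'_\beta}(Q_\beta \otimes_{L_\beta} M) \;=\; (k+1)^{r+1}\,\mathrm{rank}_{L_\beta}(M).
\]
Applying this to $M = H_i(\tilde{M}_\beta,\mathbb{R})$ and invoking Farber's proposition, which identifies $\mathrm{rank}_{L_\beta}(H_i(\tilde{M}_\beta,\mathbb{R}))$ with $\mathrm{rank}(HN_i(M,\beta))$, yields the claimed inequality.

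The main hurdle will be the bookkeeping behind the first paragraph: namely, that the CW-structure on $\tilde{M}_\beta$ and the wraparound contributions to $\hat{\delta}$ line up correctly so that $W_k^-$ indeed plays the role of $\overline{\tilde{V}_\beta}$ with $L'_\beta$ in place of $L_\beta$. Once this identification is in place, the remainder is a direct transcription of the opening argument together with the field-extension computation.
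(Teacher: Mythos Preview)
Your argument is correct and takes a genuinely different route from the paper's. The paper works with the \emph{quotient} $L_\beta^k=\mathbb{R}[\mathcal{H}^-]/I$, a truncated polynomial ring (in particular not a domain): it identifies $C_*^{CW}(W_k^-,\partial_-W_k^-)$ with $L_\beta^k\otimes_\mathbb{R}C_*^{CW}(\overline{\tilde{V}_\beta},\partial_-\overline{\tilde{V}_\beta})$, proves a lemma that a free $L_\beta^k$-family of size $p$ in homology contributes at least $\dim_\mathbb{R}(L_\beta^k)\cdot p$ to the real dimension, and then chains together rank comparisons over $L_\beta^k$, $L_\beta^-$ and $L_\beta$ via the universal coefficient theorem and the observation that $L_\beta$ and $L_\beta^-$ share a fraction field. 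You instead pass to the \emph{subring} $L'_\beta=\mathbb{R}[(k+1)\mathcal{H}]$, which is again an integral Laurent polynomial ring, and simply re-run the opening spectral-sequence observation with $W_k^-$ in the role of fundamental domain; the factor $(k+1)^{r+1}$ then falls out cleanly as the degree $[Q_\beta:Q'_\beta]$. Your route is arguably more transparent---it never leaves the world of domains, so ``rank'' keeps its usual meaning throughout, and it sidesteps both the free-family lemma and the UCT step---whereas the paper's route makes the finite $L_\beta^k$-module structure on $C_*^{CW}(W_k^-,\partial_-W_k^-)$ explicit (recorded in Remark~\ref{Remegal}), which has some independent interest. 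The bookkeeping you flag at the end is genuine but routine: since $\mathcal{H}_k^-$ is a transversal for $\mathcal{H}/\mathcal{H}'$, the $\mathcal{H}'$-translates of $W_k^-$ tile $\tilde{M}_\beta$, and the existing $\mathcal{H}$-equivariant CW structure restricts without change, so the identification $C_*^{CW}(\tilde{M}_\beta,\mathbb{R})\simeq L'_\beta\otimes_\mathbb{R}C_*^{CW}(W_k^-,\partial_-W_k^-)$ with a $\hat\delta$-type differential goes through verbatim.
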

The reminder of this subsection will be essentially taken by the proof of this proposition.
\begin{proof}
First, let us show the following lemma:
\begin{Lem}\label{StupidLemma} Let $([c_i])_{i=1,\ldots p}$ be a free family $L_\beta^k$-module \[H_i(L_\beta^k\otimes_{L_\beta^k}C_*^{CW}(W_k^-,\partial_-W_k^-)),\] then
\[	dim_\mathbb{R}(L_\beta^k)\times p\leq dim_\mathbb{R} (H_i(W_k^-,\partial_-W^-_k))\]
\end{Lem}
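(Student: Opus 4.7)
The plan is to reduce the lemma to a dimension count for the free $L_\beta^k$-submodule of the homology generated by the $[c_i]$, using that $L_\beta^k$ is itself finite dimensional over $\mathbb{R}$. Observe first that by Remark \ref{Remegal} the chain complex $C_*^{CW}(W_k^-,\partial_-W_k^-)$ is already an $L_\beta^k$-module, so $L_\beta^k\otimes_{L_\beta^k}C_*^{CW}(W_k^-,\partial_-W_k^-)$ is canonically identified with $C_*^{CW}(W_k^-,\partial_-W_k^-)$, and its $i$-th homology is $H_i(W_k^-,\partial_-W_k^-)$ equipped with the induced $L_\beta^k$-module structure. Under this identification, the hypothesis is simply that the family $([c_i])$ is $L_\beta^k$-linearly independent in $H_i(W_k^-,\partial_-W_k^-)$.

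The next step would be to make $\dim_\mathbb{R}L_\beta^k$ explicit. Since $L_\beta^-=\mathbb{R}[\mathcal{H}^-]$ has the elements of $\mathcal{H}^-$ as an $\mathbb{R}$-basis and the monoid operation on $\mathcal{H}^-$ corresponds, in the $(\alpha_i)$-basis, to coordinatewise addition, I would check that the ideal $I$ generated by $\partial_-\mathcal{H}_{k+1}^-$ is the $\mathbb{R}$-span of precisely those elements $\sum_i\mu_i\alpha_i\in\mathcal{H}^-$ with some $\mu_j\leq-(k+1)$. Indeed any such element can be written as the monoid sum of $-(k+1)\alpha_j\in\partial_-\mathcal{H}_{k+1}^-$ and the remainder $\sum_i\mu_i\alpha_i+(k+1)\alpha_j$, whose $j$-th coordinate is $\mu_j+(k+1)\leq 0$ and whose other coordinates are unchanged, so it still lies in $\mathcal{H}^-$. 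Consequently $\mathcal{H}_k^-$ descends to an $\mathbb{R}$-basis of $L_\beta^k$, giving
\[
\dim_\mathbb{R}L_\beta^k=|\mathcal{H}_k^-|=(k+1)^{r+1}.
\]

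Finally, the definition of linear independence over $L_\beta^k$ means that the morphism $(L_\beta^k)^p\to H_i(W_k^-,\partial_-W_k^-)$ sending the standard basis to $([c_i])_{i=1,\ldots,p}$ is injective, so the submodule $N$ it identifies is free of rank $p$. Forgetting the $L_\beta^k$-structure, $\dim_\mathbb{R}N=p\cdot(k+1)^{r+1}$, which then bounds $\dim_\mathbb{R}H_i(W_k^-,\partial_-W_k^-)$ from below by $p\cdot\dim_\mathbb{R}L_\beta^k$, as required. Everything in this argument is essentially formal; the only step that is not automatic is the combinatorial identification of $I$ in the second paragraph, which I expect to be the main (minor) obstacle.
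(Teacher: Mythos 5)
Your proof is correct and follows essentially the same route as the paper: identify $L_\beta^k\otimes_{L_\beta^k}C_*^{CW}(W_k^-,\partial_-W_k^-)$ with $C_*^{CW}(W_k^-,\partial_-W_k^-)$ via Remark \ref{Remegal}, and then observe that a free family of cardinality $p$ over $L_\beta^k$ spans an $\mathbb{R}$-subspace of $H_i(W_k^-,\partial_-W_k^-)$ of dimension $p\cdot\dim_\mathbb{R}(L_\beta^k)$. Your combinatorial identification of the ideal $I$ and the resulting count $\dim_\mathbb{R}(L_\beta^k)=(k+1)^{r+1}$ is not needed for the lemma itself, but it correctly justifies the figure the paper asserts without proof at the end of the proposition's proof.
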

\begin{proof}
	Let $\psi: L_\beta^k\otimes_{L_\beta^k}C_*^{CW}(W_k^-,\partial_-W^-_k)\rightarrow C_*^{CW}(W_k^-,\partial_-W^-_k)$ be the canonical isomorphism. We can see that $\psi$ commutes with the differential, an thus $\psi$ is a chain complex isomorphism, and induces an isomorphism in homology. More specifically, since $\psi$ is a morphism of $\mathbb{R}$-modules, then for every free family $([c_i])_i$ of elements  $[c_i]\in H_i(L_\beta^k\otimes_{L_\beta^k}C_*^{CW}(W_k^-,\partial_-W^-_k))$, $(\psi([c_i]))_i$ is a free family. Take $P_i$ a family of non-zero elements of $L_\beta^k$, then $\psi([P_i\times c_i])=[P_i\psi(c_i)]$ given by the action of $P_i$ on $\psi(c_i)$, and $[P_i c_i]$ is also forms free family. Therefore, $(\psi([P_i\times c_i]))_i$ is also a free family.
\end{proof}
In particular, this implies the following equality:
\begin{equation}\label{inégalités2.1.1.1}
dim_\mathbb{R}(L_\beta^k)rank_{L_\beta^k}\big(H_i(L_\beta^k\otimes_{L_\beta^k}C_*^{CW}(W_k^-,\partial_-W^-_k))\big)\leq dim_\mathbb{R} (H_i(W_k^-,\partial_-W^-_k))
\end{equation} 

On the other hand, the universal coefficient theorem implies that we have an injection:
\[ L_\beta^k\otimes_{L_\beta^-}H_i(L_\beta^-\otimes_\mathbb{R}C_*^{CW}(\overline{\tilde{V}_\beta},\partial_-\overline{\tilde{V}_\beta}),\hat{\delta})\hookrightarrow H_i(L_\beta^k\otimes_\mathbb{R}C_*^{CW}(\overline{\tilde{V}_\beta},\partial_-\overline{\tilde{V}_\beta}),\hat{\delta}),\]
This means that we have the following inequalities of ranks:
\begin{align}\label{inégalités2.1.1.2}
	rank_{L_\beta^-}\big(H_i(L_\beta^-\otimes_\mathbb{R}C_*^{CW}(&\overline{\tilde{V}_\beta},\partial_-\overline{\tilde{V}_\beta}),\hat{\delta}))\big)\\
	&\leq rank_{L_\beta^k}(
	L_\beta^k\otimes_{L_\beta^-}H_i(L_\beta^-\otimes_\mathbb{R}C_*^{CW}(\overline{\tilde{V}_\beta},\partial_-\overline{\tilde{V}_\beta}),\hat{\delta}))\nonumber\\
	&\leq
	rank_{L_\beta^k}( H_i(L_\beta^k\otimes_\mathbb{R}C_*^{CW}(\overline{\tilde{V}_\beta},\partial_-\overline{\tilde{V}_\beta}),\hat{\delta}))\nonumber\\
	&\leq rank_{L_\beta^k}\big(H_i(L_\beta^k\otimes_{L_\beta^k}C_*^{CW}(W_k^-,\partial_-W^-_k))\big)
\end{align}
where the last inequality stems from the fact that $\left(L_\beta^k\otimes_\mathbb{R}C_*^{CW}(\overline{\tilde{V}_\beta},\partial_-\overline{\tilde{V}_\beta}),\hat{\delta}\right)$ and $C_*^{CW}(W_k^-,\partial_-W^-_k)$ are equal as chain complexes of $L_\beta^k$-modules (see remark \ref{Remegal})

Therefore, the inequalities (\ref{inégalités2.1.1.1}) and (\ref{inégalités2.1.1.2}) imply that:
\begin{equation}\label{inégalités2.1.1.3}
dim_\mathbb{R}(L_\beta^k)rank_{L_\beta^-}\big(H_i(L_\beta^-\otimes_\mathbb{R}C_*^{CW}(\overline{\tilde{V}_\beta},\partial_-\overline{\tilde{V}_\beta}),\hat{\delta}))\big)\leq dim_\mathbb{R} (H_i(W_k^-,\partial_-W^-_k))
\end{equation}

The link between the previous inequality and the Morse-Novikov homology is given by the following lemma:

\begin{Lem}
	$rank_{L_\beta}(H_i(L_\beta\otimes_\mathbb{R}C_*^{CW}(\overline{\tilde{V}_\beta},\partial_-\overline{\tilde{V}_\beta}),\hat{\delta}))= rank_{L_\beta^-}(H_i(L_\beta^-\otimes_\mathbb{R}C_*^{CW}(\overline{\tilde{V}_\beta},\partial_-\overline{\tilde{V}_\beta}),\hat{\delta}))$
\end{Lem}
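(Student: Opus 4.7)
My plan is to show that $L_\beta$ is a flat extension of $L_\beta^-$ (in fact, a localization) sharing the same field of fractions, so that ranks are preserved when passing from one to the other.

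First I would unpack the algebraic structure of the two rings. Pick the basis $(\alpha_0,\ldots,\alpha_r)$ of $\mathcal{H}\simeq\Z^{r+1}$. Writing $x_i:=\alpha_i$, the group ring $L_\beta=\R[\mathcal{H}]$ is the Laurent polynomial ring $\R[x_0^{\pm 1},\ldots,x_r^{\pm 1}]$, while $L_\beta^-=\R[\mathcal{H}^-]$ is the polynomial ring $\R[x_0^{-1},\ldots,x_r^{-1}]$ in the inverse variables. In particular, $L_\beta$ is obtained from $L_\beta^-$ by inverting the multiplicative set $S$ generated by $x_0^{-1},\ldots,x_r^{-1}$, i.e.\ $L_\beta=S^{-1}L_\beta^-$. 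Both rings are integral domains and they share the same fraction field $Q=\R(x_0,\ldots,x_r)$.

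Next I would use flatness of localization to move the tensor product past the homology. The complex $C_*^{CW}(\overline{\tilde{V}_\beta},\partial_-\overline{\tilde{V}_\beta})$ is a finite complex of finite-dimensional $\R$-vector spaces (coming from a compact fundamental domain with its CW structure), so tensoring with $L_\beta^-$ yields a complex of finitely generated $L_\beta^-$-modules. By flatness of $L_\beta$ over $L_\beta^-$,
\[
L_\beta\otimes_{L_\beta^-}H_i\bigl(L_\beta^-\otimes_\R C_*^{CW}(\overline{\tilde{V}_\beta},\partial_-\overline{\tilde{V}_\beta}),\hat{\delta}\bigr)\;\simeq\; H_i\bigl(L_\beta\otimes_\R C_*^{CW}(\overline{\tilde{V}_\beta},\partial_-\overline{\tilde{V}_\beta}),\hat{\delta}\bigr).
\]

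Finally, I would conclude by computing the rank of each side over its base ring by tensoring with $Q$. For any finitely generated $L_\beta^-$-module $N$ one has $\mathrm{rank}_{L_\beta^-}(N)=\dim_Q(Q\otimes_{L_\beta^-}N)$, and similarly for $L_\beta$-modules; since $Q\otimes_{L_\beta^-}N\simeq Q\otimes_{L_\beta}(L_\beta\otimes_{L_\beta^-}N)$ (because $Q$ factors through $L_\beta$), the ranks of $N$ and of $L_\beta\otimes_{L_\beta^-}N$ agree. Applying this to $N=H_i(L_\beta^-\otimes_\R C_*^{CW}(\overline{\tilde{V}_\beta},\partial_-\overline{\tilde{V}_\beta}),\hat{\delta})$, together with the isomorphism above, gives the claimed equality.

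I do not expect a serious obstacle here: the whole statement is an avatar of the fact that rank is invariant under localization. The only mildly delicate point is confirming that $H_i(L_\beta^-\otimes_\R C_*^{CW}(\ldots),\hat{\delta})$ is finitely generated over $L_\beta^-$ — but this is immediate because $L_\beta^-\otimes_\R C_*^{CW}(\overline{\tilde{V}_\beta},\partial_-\overline{\tilde{V}_\beta})$ is a finite chain complex of finitely generated free $L_\beta^-$-modules, $L_\beta^-$ being Noetherian (it is a polynomial ring in finitely many variables).
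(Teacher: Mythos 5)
Your proposal is correct and follows essentially the same route as the paper: both arguments come down to the fact that $L_\beta$ and $L_\beta^-$ share the fraction field $Q_\beta$, that base change to a flat overring commutes with homology, and that rank is computed by tensoring with $Q_\beta$. The paper applies this directly to each ring separately (identifying both ranks with $\dim_{Q_\beta}H_i(Q_\beta\otimes_\R C_*^{CW}(\overline{\tilde{V}_\beta},\partial_-\overline{\tilde{V}_\beta}),\hat{\delta})$), while you insert the intermediate observation that $L_\beta$ is a localization of $L_\beta^-$; this is a cosmetic difference.
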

\begin{proof}
	Notice that $L_\beta$ and $L_\beta^-$ have the same field of fractions $Q_\beta$. therefore, by the universal coefficient theorem, we have the following isomorphisms:
	\[ Q_\beta\otimes_{L_\beta^-}H_i(L_\beta^-\otimes_\mathbb{R}C_*^{CW}(\overline{\tilde{V}_\beta},\partial_-\overline{\tilde{V}_\beta}),\hat{\delta})\rightarrow H_i(Q_\beta\otimes_\mathbb{R}C_*^{CW}(\overline{\tilde{V}_\beta},\partial_-\overline{\tilde{V}_\beta}),\hat{\delta});\]
	and
	\[ Q_\beta\otimes_{L_\beta}H_i(L_\beta\otimes_\mathbb{R}C_*^{CW}(\overline{\tilde{V}_\beta},\partial_-\overline{\tilde{V}_\beta}),\hat{\delta})\rightarrow H_i(Q_\beta\otimes_\mathbb{R}C_*^{CW}(\overline{\tilde{V}_\beta},\partial_-\overline{\tilde{V}_\beta}),\hat{\delta});\]
	Therefore,
	\begin{align*}
	dim(H_i(Q_\beta\otimes_\mathbb{R}C_*^{CW}(\overline{\tilde{V}_\beta},\partial_-\overline{\tilde{V}_\beta}),\hat{\delta}))&=rank_{L_\beta^-}(H_i(L_\beta^-\otimes_\mathbb{R}C_*^{CW}(\overline{\tilde{V}_\beta},\partial_-\overline{\tilde{V}_\beta}),\hat{\delta}))\\
	&=rank_{L_\beta}(H_i(L_\beta\otimes_\mathbb{R}C_*^{CW}(\overline{\tilde{V}_\beta},\partial_-\overline{\tilde{V}_\beta}),\hat{\delta})).
	\end{align*}
\end{proof}
Now we use the universal coefficient theorem to derive:
\begin{align*}
	dim_{Nov(\Gamma_\beta)}(HN_i(M,\beta))&= 
	dim_{Q_\beta}(H_i(Q_\beta\otimes_\mathbb{R}C_*^{CW}(\overline{\tilde{V}_\beta},\partial_-\overline{\tilde{V}_\beta}),\hat{\delta}))\\&=rank_{L_\beta}(H_i(L_\beta\otimes_\mathbb{R}C_*^{CW}(\overline{\tilde{V}_\beta},\partial_-\overline{\tilde{V}_\beta}),\hat{\delta}))
\end{align*}
Therefore, putting the previous lemma and inequality (\ref{inégalités2.1.1.3}) together imply that:
\[dim_\mathbb{R}(L_\beta^k)dim_{Nov(\Gamma_\beta)}(HN_i(M,\beta))\leq dim_\mathbb{R} (H_i(W_k^-,\partial_-W^-_k))\]

Finally, notice that $dim_\mathbb{R}(L_\beta^k)=(k+1)^{r+1}$ (volume of a hypercube of dimension $r+1$ and with side of length $k+1$), thus showing the proposition.\end{proof}

Observe that $W^-_{2k+1}=\alpha_0^{-k}\circ\ldots\circ\alpha_r^{-k}( W_k)$.This leads us to the following corollary:

\begin{Cor}\label{CorMorseNovIneg}
	$(2k+1)^{r+1}rank(HN_i(M,\beta))\leq rank(H_i(W_k,\partial_-W_k))$
\end{Cor}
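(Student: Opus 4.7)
The plan is to deduce the corollary directly from the preceding proposition by recognizing that $W_k$ is, up to a single deck transformation, one of the "negative" regions $W^-_j$, so the already-established bound transfers over.

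First, I would apply the proposition at level $j = 2k$ (in place of $k$) to obtain
\[(2k+1)^{r+1}\,\mathrm{rank}(HN_i(M,\beta)) \;\leq\; \mathrm{rank}\bigl(H_i(W^-_{2k},\partial_- W^-_{2k})\bigr).\]
This uses nothing new: the number $(j+1)^{r+1} = \dim_\mathbb{R}(L_\beta^{j})$ with $j=2k$ gives exactly the factor $(2k+1)^{r+1}$ appearing on the left-hand side of the target inequality, which is why $2k$ is the right choice.

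Next, I would verify that the deck transformation $\alpha := \alpha_0^{-k}\circ\cdots\circ\alpha_r^{-k} \in \mathcal{H}$ maps $W_k$ homeomorphically onto $W^-_{2k}$ while sending $\partial_- W_k$ onto $\partial_- W^-_{2k}$. The set-level check is just relabeling indices: acting by $\alpha$ on $\mathcal{H}_k = \{\sum \lambda_i\alpha_i : \lambda_i \in \{-k,\ldots,k\}\}$ shifts each coordinate by $-k$, producing exactly $\{-2k,\ldots,0\}^{r+1}$, which is $\mathcal{H}^-_{2k}$. Because $\alpha$ is a deck transformation of $\tilde M_\beta$, the primitive $g$ is shifted by a constant under $\alpha$, so the gradient direction of $g$ is preserved; hence the $dg$-entering portion of the boundary is sent to the $dg$-entering portion of the boundary, i.e.\ $\alpha(\partial_- W_k) = \partial_- W^-_{2k}$.

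Combining these two steps, $\alpha$ induces an isomorphism of pairs of CW complexes, whence
\[H_i(W_k,\partial_- W_k)\;\cong\; H_i(W^-_{2k},\partial_- W^-_{2k}),\]
and substituting into the inequality from the first step yields the desired bound
\[(2k+1)^{r+1}\,\mathrm{rank}(HN_i(M,\beta)) \;\leq\; \mathrm{rank}\bigl(H_i(W_k,\partial_- W_k)\bigr).\]
There is really no hard step here; the only thing worth being careful about is checking that the positive/negative boundary decomposition of the fundamental domains patches correctly under the $\mathcal{H}$-action, which follows formally from $\alpha^* g = g + c$ for a constant $c$.
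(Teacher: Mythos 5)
Your proof is correct and follows essentially the same route as the paper: apply the preceding proposition at a doubled index and identify $W_k$ with a translate of a ``negative'' region via the deck transformation $\alpha_0^{-k}\circ\cdots\circ\alpha_r^{-k}$. In fact your choice of index $2k$ (rather than the paper's $2k+1$) is the one consistent with $\dim_\mathbb{R}(L_\beta^{j})=(j+1)^{r+1}$ and with $\alpha_0^{-k}\circ\cdots\circ\alpha_r^{-k}(\mathcal{H}_k)=\{-2k,\ldots,0\}^{r+1}=\mathcal{H}^-_{2k}$, so you have also quietly corrected an off-by-one in the text.
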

\begin{proof}
	By the two previous lemmas, \[rank(HN_i(M,\beta))\leq rank_{L_\beta^{2k+1}}(H_i(L_\beta^{2k+1}\otimes_{L_\beta^{2k+1}}C_*^{CW}(W_{2k+1}^-,\partial_-W_{2k+1}^-)))\] and \[(2k+1)^{r+1} rank_{L_\beta^{2k+1}}(H_i(L_\beta^{2k+1}\otimes_{L_\beta^{2k+1}}C_*^{CW}(W_{2k+1}^-,\partial_-W_{2k+1}^-)))\]\[\leq rank_\mathbb{R}(H_i(W_{2k+1}^-,\partial_-W_{2k+1}^-)).\]
	This allows us to conclude by pointing out that: \[H_i(W_{2k+1}^-,\partial_-W_{2k+1}^-)=H_i(W_{k},\partial_-W_{k}).\]
\end{proof}

\subsection{\centering Morse theory for cobordisms between manifolds with boundary}\label{MorseCob}

Let $N_1,N_2$ be two compact manifolds with diffeomorphic boundary that we will denote by $\partial N$, $C$ be a cobordism from $N_1$ to $N_2$ such that $\partial C= N_1\times\{0\}\cup N_2\times\{1\}\cup\partial N\times[0,1]$. We will consider Morse functions $f$ such that 
\begin{enumerate}
\item $\forall x\in C-N_1\times\{0\}, f(x)>inf(f)$,
\item $\forall x\in C-N_2\times\{1\}, f(x)<sup(f)$,
\item  $f_{|N_1\times\{0\}}=inf(f)$, $f_{|N_2\times\{1\}}=sup(f)$.
\end{enumerate}
Assume moreover that the differential $df$ is non-zero on the boundary, and that the gradient of $f$ is tangent to $\partial N\times [0,1]$ for some Riemannian metric. If we want to drop the compacity condition on $C$, we need to ask that $f$ only has a finite number of critical points.

We can glue $C$ to itself along $\partial N\times[0,1]$ to make a new cobordism $C'$ from $N_1\cup_{\partial N}N_1$ to $N_2\cup_{\partial N}N_2$. We can also take $f'$ the obvious extension of $f$ to $C'$. Then, for every $i$, \[\# Crit_j(f')=2\# Crit_j(f)\geq rank(H_j(C',N_1\cup_{\partial N}N_1\times\{0\})).\]

Moreover, we have the Mayer-Vitoris sequence:
\begin{align*}
	\ldots\overset{-1}{\rightarrow}H_j(\partial N\times[0,1],\partial N\times\{0\})\rightarrow& H_j(C,N_1\times\{0\})+H_j(C,N_1\times\{0\})\\
	& \rightarrow H_j(C',(N_1\cup_{\partial N}N_1)\times\{0\})\overset{-1}{\rightarrow}\ldots
\end{align*}
Therefore, $2rank( H_j(C,N_1\times\{0\}))=rank(H_j(C',N_1\cup_{\partial N}N_1\times\{0\}))$, yielding: \[\# Crit_j(f)\geq rank( H_j(C,N_1\times\{0\})).\]

\centering\subsection{On the lift of $\beta$-Morse functions}\label{relevéfonction}

\raggedright
\begin{Not}
For the rest of this paper, the manifold $M$ will also be closed
\end{Not}

Let $f$ be a function on some closed manifold $M$ and let $\beta$ be a closed $1$-form on $M$. Assume that $f$ is $\beta$-Morse. Take $g$ a primitive of $\beta$ on $\tilde{M}_\beta$ the integral cover of $\beta$.
Let $f$ be the lift of $f$ to $\tilde{M}_\beta$. Assume that the fundamental domain $\tilde{V}_\beta$ has been chosen such that $e^{-g}f$ has no critical point on $\partial\overline{\tilde{V}_\beta}$. Notice that in essence, $W_k$ is a cobordism $C$ from $N_1:=\partial_- W_k$ to $N_2:=\partial_+W_k\simeq\partial_-W_k$, with the corners of $W_k$ acting as the ``boundary'' of the cobordism (that is, $\partial C-(N_1\cup N_2)$), which is isomorphic to $N_1\times[0,1]$. 

Therefore, we will extend $(e^{-g}f)_{|W_k}$ in four steps so that it fulfills the conditions outlined in the previous subsection. We will focus on constructing the extension on $\partial_-W_k$, but the same process can be used to extend along the other boundary.\newline

\paragraph{\textbf{Extension 1.}}
Let $x_0\in\partial_-\overline{\tilde{V}_\beta}$ such that $f(x_0)=\inf\left(f_{|\partial_-\overline{\tilde{V}_\beta}}\right)$ and $\alpha\in\mathcal{H}$ such that $\alpha\neq id$. On $\alpha(\partial_-\overline{\tilde{V}_\beta})$, define: \[h^-_\alpha(y)=e^{-\int_\gamma dg}e^{-\int_\alpha\beta}e^{-g(x_0)}f(x_0)=e^{-g(y)+g(x_0)}e^{-g(x_0)}f(x_0)=e^{-g(y)}f(x_0)\] with $\gamma$ some path from $\alpha\cdot x_0$ to $y$.
Let $\phi:\mathbb{R}\rightarrow[0,1]$ be a map with support in $[0,1]$ of positive derivative and such that $\phi(0)=0$, $\phi(1)=1$ and $\phi'_{|t=0}=\phi'_{|t=1}=0$. Take: \[H^-_\alpha(y,t)=\phi(t)(f(y)-(1-t)df_{|y}(\partial_t))+(1-\phi(t))(h(y)+t).\] Notice that $H_\alpha$ is $C^1$ and that the places where the differential of $H^-_\alpha$ is $0$ do not depend on $\alpha$.

Do take note that the differential of $H^-_\alpha$ enters along $\partial_-\overline{\tilde{V}_\beta}\times\{0\}$. Similarly, we can define $H^+_\alpha$ by swapping $\inf$ with $\sup$ to extend $f$ along $\alpha(\partial_+\overline{\tilde{V}_\beta})$ such that the differential of $H^+_\alpha$ exits along $\partial_+\overline{\tilde{V}_\beta}\times\{1\}$. We can now glue together the various maps $H^\pm_\alpha$ along the boundary of $W_k$ to form our first extension, $H$, defined on \[(\partial_-W_k\times[0,1])\cup_{\partial_-W_k\times\{1\}}W_k\cup_{\partial_+W_k\times\{0\}}(\partial_+W_k\times[0,1]).\] 
This map $H$ is of class $C^1$ on its domain of definition, and of class $C^2$ on \[(\partial_-W_k\times[0,1[)\sqcup int(W_k)\sqcup (\partial_+W_k\times]0,1]),\]
where $int$ is the interior. As all the critical points of $H$ are placed where $H$ is $C^2$, we may assume that $H$ is a smooth Morse function. 

The total number of critical points added at this stage is therefore a multiple of this size of how many copies of $\tilde{V}_\alpha$ are in the boundary of $W_k$.\newline

\paragraph{\textbf{Extension 2.}}
Now take the map: \[g_\alpha^-(y)=e^{-\int_\gamma dg}e^{-\int_\alpha \beta}\inf\left(e^{-g}_{|\partial_-\overline{\tilde{V}_\beta}}\right)f(x_0).\] 
We then do a similar extension as above between $H$ and the various $g^-_\alpha$ to give an extension $G^-$. Note that this extension does not add critical points since the differential of $H^-_\alpha$ enters along $\partial_-\overline{\tilde{V}_\beta}\times\{0\}$. Similarly, we can define $G^+_\alpha$ by swapping $\inf$ with $\sup$. We do the same kind of extension as above by glueing the various $G^\pm_\alpha$. This defines an extension $G$ on \[(\partial_-W_k\times[0,1])\cup_{\partial_-W_k\times\{1\}}W_k\cup_{\partial_+W_k\times\{0\}}(\partial_+W_k\times[0,1])\] that we may assume to be smooth.\newline

\paragraph{\textbf{Extension 3.}}
Now take the map: \[m_k(y)=\inf_{\alpha\in\mathcal{H}_k,y\in\alpha(\partial_-\overline{\tilde{V}_\beta})}(g^-_\alpha(y))\times\inf\left(e^{-g}_{|\partial_-\overline{\tilde{V}_\beta}}\right)f(x_0)-1,\] 
We then do a similar extension as above between $G^-$ and $m$ to give an extension $l^-$. Note that this extension does not add critical points. Similarly, we can define $l^+_\alpha$ by swapping $\inf$ and $-1$ with $\sup$ and $+1$. Call $L$ the total extension of $e^{-g}f$ done at that stage, which is defined on $(\partial_-W_k\times[0,1])\cup_{\partial_-W_k\times\{1\}}W_k\cup_{\partial_-W_k\times\{0\}}(\partial_+W_k\times[0,1])$. \newline

\paragraph{\textbf{Extension 4.}}
Notice that at this stage, $L$ almost fulfills the conditions of the previous subsection. However, there might be some problems along the corners of $W_k$, which correspond to the boundary of the cobordism.

On $(\partial_-W_k\cap\partial_+ W_k)\times[0,1]=:\partial\partial W_k\times[0,1]$, take the map: \[c(t)=(1-t)(\inf(L)-1+t)+t(\sup(L)+t),\]
and then take an interpolation (at least $C^1$) between $c$ and $L$ along $(\partial_-W_k\cap\partial_- W_k)\times[0,1]$ as previously done. This last interpolation might add a number of critical points at each corners (independent of which corner is considered), but no more than some multiple of the number of
the number of copies of $\tilde{V}_\alpha$ that are in the boundary of $W_k$.

Call $F_k$ the total extension of $(e^{-g}f)_{|W_k}$, and call the $\tilde{F}_k$ the part of the map that is not equal to $e^{-g}f$. The map $F_k$ is defined on 
\begin{align*}
	C_k:=\big[(\partial_-W_k\times[0,1])&\cup_{\partial_-W_k\times\{1\}}W_k\cup_{\partial_+W_k\times\{0\}}(\partial_+W_k\times[0,1])\big]\\
	&\cup_{\{1\}\times(\partial\partial W_k)\times[0,1]}\big[[0,1]\times(\partial\partial W_k)\times[0,1]\big],
\end{align*}
while $\tilde{F}_k$ is defined on  $C_k-W_k$. Note that $\partial C_k=((\partial N)\times [0,1])\cup (N\times\partial[0,1])$ for $N\simeq\partial_- W_k\cup_{\{1\}\times(\partial\partial_- W_k)}[0,1]\times (\partial\partial_-W_k)$.\newline

\paragraph{\textbf{Final observation.}} Note that $\mathcal{H}_k$ is a cube of dimension $2k+1$ and of side length $r+1$. As such there are two constants $K^j_1$ and $K^j_2$ depending only on $j$ such that (keeping in mind the previous subsection):
\begin{align*}\# Crit_j(F_k) &=\# Crit_j((e^{-g}f)_{|W_k})+\# Crit_j(\tilde{F}_k)\\
&\leq\# Crit_j((e^{-g}f)_{|W_k})+ 2(r+1)(2k+1)^rK^j_1\\&\qquad\qquad\qquad\qquad+2r(r+1)(2k+1)^{r-1} K^j_2\\
&=\# Crit_j((e^{-g}f)_{|W_k})+ O_{k\rightarrow +\infty}((2k+1)^r)\end{align*}

\subsection{A lower bound for $\# Crit_\beta(f)$} \label{lower}

Putting together th previous subsection and the corollary \ref{CorMorseNovIneg}, we have the following (in)equalities:
\begin{align*}
\# Crit_j(F_k) & =	\# Crit_j((e^{-g}f)_{|W_k})+ O_{k\rightarrow +\infty}((2k+1)^r)\\
	& =  (2k+1)^{r+1}\# Crit_j((e^{-g}f)_{|W_0})+ O_{k\rightarrow +\infty}((2k+1)^r)\\
	&  \geq rank(H_j(W_k,\partial_-W_k))\\
	& \geq (2k+1)^{r+1}rank(HN_j(M,\beta))
\end{align*}

Therefore,
\[\# Crit_j((e^{-g}f)_{|W_0})+ O_{k\rightarrow +\infty}((2k+1)^{-1})\geq rank(HN_j(M,\beta)),\]

and, for $k\rightarrow+\infty$, we get, \[\# Crit_j((e^{-g}f)_{|W_0})\geq rank(HN_j(M,\beta)).\]

Since $e^{-g}d_{dg}f=de^{-g}f$, the critical points of $(e^{-g}f)_{|W_0}$ are exactly the $\beta$-critical points of $f$, yielding the theorem \ref{thmImproveChantraine-Murphy}.

\section{A generalization to generating functions}\label{sec4}

Before proving theorem \ref{thm1}, let us give a refresher on what is a generating function.

\begin{Def}
Take a closed manifold $M$ and a smooth map $F:M\times\mathbb{R}^m\rightarrow\R$ for some $m\geq0$. We say that $F$ is a generating function if there is some compact $K\subset\R^m$ and some quadratic map $Q:\R^m\rightarrow\R$ such that $F=Q$ outside of $M\times K$.
\end{Def}

\begin{proof}[proof of theorem \ref{thm1}]
With the same notations as in the definition above.

 Take $a$ big enough and $b$ small enough so that $F$ is quadratic on: \[M\times\{\xi\in\mathbb{R}^m\;:\;Q(\xi)<b\textit{ or }a<Q(\xi)\}.\]
Write $E_b^a=\{\xi\in\mathbb{R}^m\;:\;b<Q(\xi)<a\}$. Abusing the notations slightly, the pullback of a quantity to $\tilde{M}_\beta\times\mathbb{R}^m$ will be named the same. Notice that by defining:
\begin{align*}
\text{1. }\partial_+ (W_k\times E_b^a)&=(\partial_+W_k)\times E_b^a\cup W_k\times\{\xi\in\mathbb{R}^m\;:\;Q(\xi)=a\},\\
\text{2. }\partial_- (W_k\times E_b^a)&=(\partial_-W_k)\times E_b^a\cup W_k\times\{\xi\in\mathbb{R}^m\;:\;Q(\xi)=b\},\\
\text{3. }\partial_+ (W_k\times E_b^a)&\cap\partial_-(W_k\times E_b^a)=(\partial_+W_k\cap\partial_-W_k)\times E_b^a,
\end{align*}
we can apply the construction of the subsection \ref{relevéfonction} to $e^{-g}F$, where $g$ is a primitive of $\beta$ on $\tilde{M}_\beta\times\mathbb{R}^m$.

Since $de^{-g}Q$ enters along $W_k\times\{\xi\in\mathbb{R}^m\;:\;Q(\xi)=b\}$ and exits along $W_k\times\{\xi\in\mathbb{R}^m\;:\;Q(\xi)=a\}$, then, just as previously, the construction adds at most $O_{k\rightarrow+\infty}(k^r)$ critical points where $r+1=rank_\mathbb{Z}(\mathcal{H})$.

Therefore, by applying subsection \ref{rank} to $M\times E_b^a$ and classical Morse theory, we have that: \[Crit_j(e^{-g}F_{|W_k\times\mathbb{R}^m})+O_{+\infty}(k^r)\geq rank\left(H_j\left(W_k\times E_b^a,\partial_-(W_k\times E_b^a)\right)\right)
.\]
Call $H$ the vector subspace on which $Q$ is negative definite, and $H_b^a=H\cap E_b^a$. Then, by the excision theorem, we have that:
\begin{align*}
H_j\big(W_k\times E_b^a,\partial_-(W_k&\times E_b^a)\big)= H_j\left(W_k\times E_b^a,(\partial_-W_k)\times E_b^a\cup W_k\times Q^{-1}(\{b\})\right) \\
&\simeq H_j\left(W_k\times H^a_{b},(\partial_-W_k)\times H^a_{-\infty}\cup W_k\times (H\cap Q^{-1}(\{b\}))\right)\\
&\simeq H_j\left(W_k\times H,(\partial_-W_k)\times H\cup W_k\times H^b_{-\infty}\right)
\end{align*}
Write:
\begin{align*}
	\text{1. }A_j&=H_j\left((\partial_-W_k)\times H, (\partial_-W_k)\times H^b_{-\infty}\right),\\
	\text{2. }B_j&=H_j\left(W_k\times H, W_k\times H^b_{-\infty}\right),\\
	\text{3. }C_j&=H_j\left(W_k\times H,(\partial_-W_k)\times H\cup W_k\times H^b_{-\infty}\right).
\end{align*}

Then, taking $dim(H)=p$, we have the following diagram:
\begin{figure}[H]
\centering
\begin{tikzpicture}
	\node  (sub1)      {$\ldots$};
	\node  (sub2)   [right=of sub1]   {$H_{j-p}\left(\partial_-W_k\right)$};
	\node  (sub3)   [right=of sub2]    {$H_{j-p}\left(W_k\right)$};
	\node  (sub4)   [right=of sub3]    {$H_{j-p}\left(W_k,\partial_-W_k\right)$}; 
	\node  (sub5)   [right=of sub4]    {$\ldots$};

	\node     (main2)      [above=of sub2]   {$A_j$};
	\node      (main3)       [above=of sub3] {$B_j$};
	\node      (main4)       [above=of sub4] {$C_j$};
	\node  (main1)    [above=of sub1,yshift=0.35cm]   {$\ldots$};
	\node  (main5)    [above=of sub5,yshift=0.35cm]  {$\ldots$};
	
	\node at (4.5, 0.8)   (a) {$\circlearrowleft$};
	
	\draw[->] (sub1.east) -- (sub2.west) node[midway,above] {$-1$};
	\draw[->] (sub2.east) -- (sub3.west);
	\draw[->] (sub3.east) -- (sub4.west);
	\draw[->] (sub4.east) -- (sub5.west) node[midway,above] {$-1$};
	
	\draw[->] (main1.east) -- (main2.west) node[midway,above] {$-1$};
	\draw[->] (main2.east) -- (main3.west);
	\draw[->] (main3.east) -- (main4.west);
	\draw[->] (main4.east) -- (main5.west) node[midway,above] {$-1$};
	
	\draw[<-] (sub2.north) -- (main2.south) node[midway,text depth=2ex,right,rotate=-90,xshift=-0.25cm] {$\simeq$};
	\draw[<-] (sub3.north) -- (main3.south) node[right,midway,text depth=2ex,rotate=-90,xshift=-0.25cm] {$\simeq$};
	
\end{tikzpicture}
\end{figure}
where the two isomorphisms are given by the Thom isomorphism, since $ H^b_{-\infty}$ is the complement of a ball in $H$. Therefore, 
\begin{align*}
rank\big(H_j\big(W_k\times H,(\partial_-W_k)\times H\cup W_k\times& H^b_{-\infty}\big)\big)=rank\left(H_{j-p}\left(W_k,\partial_-W_k\right)\right)\\
&\geq (2k+1)^{r+1}rank(HN_{j-p}(M,\beta))
\end{align*}
We can thus divide by $(2k+1)^{r+1}$ and make $k$ go to $+\infty$, yielding
\[\# Crit_j^\beta(F)+o_{+\infty}(1)\geq rank(HN_{j-p}(M,\beta)).\]
The $o_{+\infty}(1)$ can then be taken to be arbitrarily small and, more specifically, strictly smaller than $1$ thus yielding the theorem
\end{proof}

A direct corollary of theorem \ref{thm1} is the following inequality, due to Chantraine and Murphy:
\begin{Cor}[\cite{Chantraine2016ConformalSG}]
	Let $M$ be a closed connected manifold, $\beta$ be a closed $1$-form on $M$ and, for some $m\in\mathbb{N}$, $F:M\times\mathbb{R}^m\rightarrow\mathbb{R}$ be a generating function. Calling $\beta$ the various pullbacks of $\beta$, assume $F$ is $\beta$-Morse. Then we have that:
\[\# Crit^\beta(F)\geq \sum_j rank(HN_{i}(M,\beta))\]
\end{Cor}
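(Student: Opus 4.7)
The plan is very short, since the corollary is essentially a bookkeeping consequence of Theorem \ref{thm1}. The total number of $\beta$-critical points of $F$ splits by index as
\[\# Crit^\beta(F) = \sum_j \# Crit^\beta_j(F),\]
this being a finite sum because $F$ is $\beta$-Morse (so $\beta$-critical points are isolated) and $M \times \mathbb{R}^m$ only carries critical points of $F$ inside the compact region where $F$ fails to be the quadratic form $Q$ at infinity.

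Next I would apply Theorem \ref{thm1} term by term, which yields
\[\# Crit^\beta_j(F) \geq rank(HN_{j-p}(M,\beta))\]
for every integer $j$, where $p$ is the dimension of the negative definite subspace of $Q$. Summing over $j$ and performing the re-indexation $i = j - p$ gives
\[\# Crit^\beta(F) \geq \sum_j rank(HN_{j-p}(M,\beta)) = \sum_i rank(HN_i(M,\beta)),\]
which is exactly the inequality claimed. The sum on the right is finite because $HN_i(M,\beta) = 0$ whenever $i < 0$ or $i > \dim M$, so there is no convergence issue.

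There is essentially no obstacle: all the work has been done in Section \ref{sec4}, and the only content of the corollary beyond Theorem \ref{thm1} is the trivial observation that summing index-wise lower bounds produces a lower bound on the total count, together with the harmless shift of summation index induced by $p$. I would simply note, as a closing remark, that the statement recovers word for word the inequality of Chantraine and Murphy from \cite{Chantraine2016ConformalSG}, and that Theorem \ref{thm1} is strictly stronger since it retains the index information lost by summation.
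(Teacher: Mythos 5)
Your proof is correct and matches the paper's intended argument: the corollary is stated there as a direct consequence of Theorem \ref{thm1}, obtained exactly by summing the index-wise bounds over $j$ and reindexing by $i=j-p$. Your added remarks on finiteness of the sums are harmless and accurate.
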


Therefore, from the theorem theorem \ref{thm1}, we can derive the main theorem of \cite{Chantraine2016ConformalSG} (the reader may refer themselves to the next section for the definitions of everything $\lcs$):

\begin{The}[\cite{Chantraine2016ConformalSG}]
	Let $M$ be a closed connected manifold, $\beta$ be a closed $1$-form on $M$ and call $\beta$ the various pullbacks of $\beta$. Let $\lambda$ be the canonical Liouville form on $T^*M$, and $L$ be a Lagrangian of the $\lcs$ manifold $(T^*M,\lambda,\beta)$ given by a generating function $F$ (in the $\lcs$ sense). If $F$ is $\beta$-Morse, then:
	\[\# L\cap M\geq \sum_j rank(HN_j(M,\beta)).\]
\end{The}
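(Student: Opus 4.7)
The plan is to reduce the statement directly to the corollary preceding it, using the basic dictionary between generating functions (in the $\lcs$ sense) and $\beta$-critical points. The only substantive point is to verify that intersections of $L$ with the zero section $M \subset T^*M$ are exactly in bijection with the $\beta$-critical points of $F$, after which the bound follows immediately from the already established inequality $\# Crit^\beta(F)\geq \sum_j rank(HN_j(M,\beta))$.

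First I would recall the definition of an $\lcs$-generating function in this setup: for $F:M\times\R^m\to\R$, the generated Lagrangian is the image of
\[
\Sigma_F:=\{(x,\xi)\in M\times\R^m\,:\,\partial_\xi F(x,\xi)=0\}
\]
under the map $(x,\xi)\mapsto (x,(d_\beta F)_x)$, where $(d_\beta F)_x$ denotes the $M$-component of the $1$-form $d_\beta F$ on $M\times\R^m$. The $\beta$-Morse hypothesis on $F$ ensures that $d_\beta F$ is transverse to the zero section of $T^*(M\times\R^m)$, so that $\Sigma_F$ is a smooth submanifold near every point where $d_\beta F$ vanishes in the $\xi$-direction, and the generated $L$ is indeed a smooth Lagrangian submanifold of the $\lcs$ manifold $(T^*M,\lambda,\beta)$.

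Next I would identify $L\cap M$ with $Crit^\beta(F)$. A point $x\in L\cap M$ corresponds to some $(x,\xi)\in \Sigma_F$ such that $(d_\beta F)_x=0$ in $T^*_xM$. Combined with the defining condition $\partial_\xi F(x,\xi)=0$ of $\Sigma_F$, this says that every component of the $1$-form $d_\beta F$ on $M\times\R^m$ vanishes at $(x,\xi)$, i.e. that $(x,\xi)$ is a $\beta$-critical point of $F$. Conversely, any $\beta$-critical point of $F$ projects to a point of $L\cap M$. The $\beta$-Morse assumption makes $d_\beta F$ transverse to the zero section, so all these critical points are isolated and map injectively to $L\cap M$, giving the equality $\# L\cap M=\# Crit^\beta(F)$.

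Once this identification is in hand, the conclusion is immediate: applying the corollary derived from Theorem \ref{thm1} to the $\beta$-Morse generating function $F$ yields
\[
\# L\cap M=\# Crit^\beta(F)\geq \sum_j rank(HN_j(M,\beta)).
\]
The only genuine obstacle is the bookkeeping in the identification $L\cap M\simeq Crit^\beta(F)$, in particular checking that no two distinct $\beta$-critical points of $F$ are identified when projected to $T^*M$; this is guaranteed by the $\beta$-Morse condition (which forces isolated zeroes of $d_\beta F$) and is the standard fact already implicit in the $\lcs$ generating function formalism of \cite{Chantraine2016ConformalSG}, so no new machinery is needed.
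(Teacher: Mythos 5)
Your proposal is correct and is exactly the route the paper takes: the theorem is stated as an immediate consequence of the preceding corollary $\# Crit^\beta(F)\geq \sum_j rank(HN_j(M,\beta))$ via the identification of $L\cap M$ with the $\beta$-critical points of $F$. One small caveat: the $\beta$-Morse condition gives isolated zeroes of $d_\beta F$ on $M\times\R^m$ but does not by itself prevent two distinct $\beta$-critical points from lying over the same base point $x\in M$; the count goes through because $L\cap M$ is counted on the (possibly immersed) Lagrangian, i.e.\ on the domain of the immersion $\Sigma_F\to T^*M$, which is the convention of \cite{Chantraine2016ConformalSG}.
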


Note that as pointed out in \cite{Chantraine2016ConformalSG}, Hamiltonian isotopies (in the $\lcs$ sense)  of a Lagrangian with a generating function also have a generating function by Chekanov's theorem on the persistence of generating functions.

\section{(Essential) Liouville chords}\label{sec5}
\subsection{Reminders about locally conformally symplectic geometry}
In this subsection, we give the reader a laconic presentation of the notions of $\lcs$ geometry pertinent to the rest of the section.

\begin{Def}[Exact $\lcs$ manifold/structure]
Let $M$ be a manifold, $\beta\in\Omega^1(M)$ be closed and $\lambda\in\Omega^1(M)$ such that $d_\beta\lambda$ is non-degenerate. Then $(M,\lambda,\beta)$ is an exact $\lcs$ manifold.

An exact $\lcs$ structure on $M$ is an equivalence class of exact $\lcs$ manifolds given by:
\[(M,\lambda,\beta)\sim(M,e^g\lambda,\beta+dg),\; g\in C^\infty(M).\]
\end{Def}

Notice that since $d_0=d$, we recover the exact symplectic case as a particular case of this definition. In this generalization, it is also possible to define exact Lagrangians.

\begin{Def}[Exact Lagrangian]
	Let $(M,\lambda,\beta)$ be an exact $\lcs$ manifold and $L\subset M$ be a submanifold such that $i^*\lambda=d_{i^*\beta}f$ for $i$ the inclusion and $f\in C^\infty(L)$. Then $L$ is called $\beta$-exact Lagrangian. Whenever $\beta$ does not matter or is obvious, the ``$\beta$-'' will be dropped.
\end{Def}
\begin{Not}
	From now on we will omit writing pullbacks for $\beta$ whenever confusion is unlikely. For example, we will write $\beta$ for $i^*\beta$.
\end{Not}
As in symplectic geometry, the first examples of exact Lagrangians are given by the graph of functions.
\begin{Exe}
Let $M$ be a manifold, $\beta\in\Omega^1(M)$ be closed and take $f\in C^\infty(M)$. Then \[\Gamma_\beta(f):=(q,(d_\beta f)_q)\in T^*M\; :\; q\in M\]
is a $\beta$-exact Lagrangian of $(T^*M,\lambda,\beta)$, for $\lambda$ the canonical Liouville form.
\end{Exe}
\begin{Not}
	From now on $\beta$ will be a closed $1$-form on a manifold $M$ and $\lambda$ will be the canonical Liouville form of $T^*M$.
\end{Not}
This example is generalized by the notion of generating function.
\begin{Def}[Generating function]
	Let $M$ be a manifold and, for some $k\in\N$, let $F:M\times\R^k\rightarrow\R$ be a smooth map. If there is a compact $K\subset\R^k$ such that $F$ is quadratic outside of $M\times K$ (aka. is ``quadratic at infinity''), then $F$ will be called a generating function.
	
	Define \[V_F:=\Gamma_\beta(F)\cap (T^*M)\times\R^k\subset T^*(M\times\R^k)\]
	For a generic $F$, this is a submanifold, and the projection of $V_F$ on $T^*M$ is an immersed submanifold, called the (immersed) Lagrangian submanifold associated to $F$ and denoted $L_F$.
	
	The generating function $F$ will be said to be strictly positive on $L_F$ if the pullback of $\lambda$, the canonical Liouville form of $T^*M$, on $L$ is equal to $d_\beta f$ for some strictly positive map $f\in C^\infty(L_F)$.
\end{Def}
As explored in \cite{currier2025projectionexactlagrangianslocally}, Liouville chords seem to be the $\lcs$ version of the Reeb chords of contact geometry.
\begin{Def}[(essential) Liouville chord]
	For $k\in\{1,2\}$, let $L_k$ be exact Lagrangian submanifolds of $(T^*M,\lambda,\beta)$ that intersect transversely such that $i^*\lambda=d_\beta f_k$ for $i$ the inclusion and $f_k\in C^\infty(L_k,\R_+^*)$. We will assume that $\beta$ is not exact. Call $\Phi$ the flow of the canonical Liouville vector field. For any $x\in T^*M$, we have the following definitions:
	\begin{enumerate}
		\item 	A (positive) Liouville chord from $L_1$ to $L_2$ is a trajectory $\{\Phi_s(x):s\in[0,t]\}$ such that $\Phi_0(x)\in L_1$ and $\Phi_t(x)\in L_2$. We will call $t$ the length of the Liouville chord.
		\item A negative Liouville chord from $L_1$ to $L_2$ is a trajectory $\{\Phi_s(x):s\in[-t,0]\}$ such that $\Phi_0(x)\in L_1$ and $\Phi_{-t}(x)\in L_2$. We will call $-t$ the length of the Liouville chord.
		\item	A (positive) Liouville chord $\{\Phi_s(x):s\in[0,t]\}$ will be called essential if $f_2(\Phi_t(x))-e^tf_1(\Phi_0(x))\geq 0$.
		\item 	A negative Liouville chord $\{\Phi_s(x):s\in[-t,0]\}$ will be called essential if $f_2(\Phi_{-t}(x))-e^{-t}f_1(\Phi_0(x)))\leq 0$.
		\item A Liouville chord of $L_1$ is a trajectory $\{\Phi_s(x):s\in[0,t]\}$ such that $\Phi_0(x),\Phi_{t}(x)\in L_1$ for some $t>0$. We will call $t$ the length of the Liouville chord.
		\item 	A Liouville chord $\{\Phi_s(x):s\in[0,t]\}$ of $L_1$ will be called essential if $f_1(\Phi_t(x))-e^tf_1(\Phi_0(x))\geq 0$.
	\end{enumerate}
\end{Def}
\subsection{Proof of proposition \ref{prop}}
In this subsection, we will prove proposition \ref{prop}. Let us state the first and most obvious lemma concerning the $t$-difference function defined in definition \ref{t-diff}:
\begin{Lem}\label{lemma1}
Let $F_1:M\times\mathbb{R}^{k_1}\rightarrow\R^*$ and $F_2:M\times\mathbb{R}^{k_2}\rightarrow\R^*$ be generating functions that are strictly positive on the Lagrangians they define.\begin{enumerate}
\item Liouville chords of length $t$ from $L_{F_1}$ to $L_{F_2}$ are given by $Crit^\beta(\Delta^t_{F_1,F_2})$;
\item  essential positive Liouville chords of length $t$ from $L_{F_1}$ to $L_{F_2}$ are given by: \[Crit^\beta(\Delta^t_{F_1,F_2})\cap \{\Delta^t_{F_1,F_2}\geq 0\};\]
\item essential negative Liouville chords of length $t$ from $L_{F_1}$ to $L_{F_2}$ are given by: \[Crit^\beta(\Delta^t_{F_1,F_2})\cap \{\Delta^t_{F_1,F_2}\leq 0\};\]
\item essential Liouville chords of length $t$ of $L_{F_1}$ are given by: \[Crit^\beta(\Delta^t_{F_1,F_1})\cap \{\Delta^t_{F_1,F_1}\geq 0\}.\]
\end{enumerate}
\end{Lem}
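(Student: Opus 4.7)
The plan is to translate the definition of Liouville chords directly into equations on $(x, \xi_1, \xi_2) \in M \times \R^{k_1} \times \R^{k_2}$ and recognize these as exactly the $\beta$-critical point equations for $\Delta^t_{F_1, F_2}$, paired with the appropriate sign conditions.

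For part (1), I would compute $d_\beta \Delta^t_{F_1, F_2}$. Since the pullback of $\beta$ to $M \times \R^{k_1} \times \R^{k_2}$ has no $d\xi_1$ or $d\xi_2$ components and $d_\beta$ is $\R$-linear, one gets $d_\beta \Delta^t_{F_1, F_2} = d_\beta F_2 - e^t d_\beta F_1$. Splitting the vanishing of this $1$-form along the $d\xi_1$, $d\xi_2$, and $dx$ components yields the three equations $\partial_{\xi_1} F_1 = 0$, $\partial_{\xi_2} F_2 = 0$, and $(d_\beta F_2)_x = e^t (d_\beta F_1)_x$. The first two conditions say that $(x, \xi_i)$ is a fiber critical point of $F_i$, and hence parametrizes a point $p_i := (x, (d_\beta F_i)_x) \in L_{F_i}$ over the same $x \in M$. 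Since the canonical Liouville flow $\Phi_s$ on $T^*M$ acts by fiberwise multiplication by $e^s$, the third equation reads precisely $\Phi_t(p_1) = p_2$, which is the defining condition for a length-$t$ Liouville chord from $L_{F_1}$ to $L_{F_2}$.

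For parts (2)--(4), I would identify the primitive $f_i \in C^\infty(L_{F_i}, \R_+^*)$ with $F_i$ via the immersion $\iota_i : C_i := \{\partial_{\xi_i} F_i = 0\} \to L_{F_i}$ defined by $(x, \xi) \mapsto (x, (d_\beta F_i)_x)$. Evaluating $d_\beta F_i = dF_i - F_i \beta$ on tangent vectors to $C_i$, one checks that $\iota_i^* \lambda = d_\beta(F_i|_{C_i})$, so that $f_i \circ \iota_i = F_i|_{C_i}$. Granted this identification, whenever $p_1, p_2$ correspond to a Liouville chord as in part (1), one has
\[
f_2(\Phi_t(p_1)) - e^t f_1(p_1) = F_2(x, \xi_2) - e^t F_1(x, \xi_1) = \Delta^t_{F_1, F_2}(x, \xi_1, \xi_2),
\]
so the sign condition defining essentiality is exactly the sign condition on $\Delta^t_{F_1, F_2}$. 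Part (2) follows directly, part (3) is identical after reparametrizing with the negative Liouville flow, and part (4) is the specialization $F_1 = F_2$.

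The main obstacle, though minor, is verifying the identification $f_i = F_i \circ \iota_i^{-1}$: this requires checking that restricting $d_\beta F_i$ to the fiber critical locus agrees with $d_\beta$ applied to the restriction of $F_i$, which is fine precisely because $\beta$ has no fiber components and so the $F_i \beta$ term restricts cleanly to $C_i$. Everything else in the argument is a direct unpacking of the definitions of generating function, Liouville chord, and essentiality.
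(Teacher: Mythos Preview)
Your proposal is correct and follows essentially the same approach as the paper: both compute $d_\beta\Delta^t_{F_1,F_2}$ componentwise to recover the fiber-criticality and Liouville-flow conditions, then invoke the definitions for essentiality. If anything, you are more careful than the paper, which dispatches parts (2)--(4) with a one-line appeal to the definitions, whereas you spell out the identification $f_i\circ\iota_i=F_i|_{C_i}$ needed to match the value of $\Delta^t$ with the quantity $f_2(\Phi_t(p_1))-e^tf_1(p_1)$ appearing in the definition of essentiality.
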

\begin{proof}
	Let $Z_\lambda$ be the Liouville vector field of the canonical Liouville form $\lambda$ on $T^*M$, and call $\Phi$ its flow. We will call $\beta$ the various pullbacks of $\beta$.
	
	Note that for, say, $F_1$, we have that $d_\beta F_1= D_xF_1+D_\xi F_1-F\beta$ where $D_x$ denotes the differential along $M$ and $D_\xi$ is the differential along $\mathbb{R}^{k_1}$.
	
	There is a Liouville chord from $L_{F_1}$ to $L_{F_2}$ in $T^*_xM$ for some $x\in M$ if and only if the following conditions are met:\begin{enumerate}
	\item there is some $\xi_1\in\mathbb{R}^{k_1}$ and $\xi_2\in\mathbb{R}^{k_2}$ such that $(D_\xi F_1)_{(x,\xi_1)}=0$ and $(D_\xi F_2)_{(x,\xi_2)}=0$;
	\item viewing $(d_\beta F_1)_{(x,\xi_1)}$ and $(d_\beta F_2)_{(x,\xi_2)}$ in $T^*_xM$, we have \[\phi_t(x,(d_\beta F_1)_{(x,\xi_1)})=(x,(d_\beta F_2)_{(x,\xi_2)}).\]
	\end{enumerate} 
	This last condition implies that \[(x,e^t\times(d_\beta F_1)_{(x,\xi_1)})=(x,(d_\beta F_2)_{(x,\xi_2)}),\] and therefore $d_\beta \Delta^t_{F_1,F_2}=0$.

The rest of the assertions of the lemma follow by applying the definitions of the various types of Liouville chords.
\end{proof}
Take also note of the following lemma:
\begin{Lem}\label{lemma2}
Let $\pi:\tilde{M}_\beta\times\R^{k_1}\times\R^{k_2}\rightarrow M\times\R^{k_1}\times\R^{k_2}$ be the canonical projection, $i$ be the inclusion $\{\Delta^t_{F_1,F_2}\leq 0\}\subset M\times\R^{k_1}\times\R^{k_2}$ and $\mathcal{H}$ be the group of deck transformations of $\tilde{M}_\beta\times\R^{k_1}\times\R^{k_2}$. Then \begin{align*}
rank&_{Nov(i^*\beta)}\bigg(HN_i(\{\Delta^t_{F_1,F_2}\leq 0\},i^*\beta)\bigg)\\
&\leq rank_{\mathbb{R}(\mathcal{H})}\bigg(H_i\bigg(C_*\big(\pi^{-1}(\{\Delta^t_{F_1,F_2}\leq 0\}),\mathbb{R}\big)\otimes_{\mathbb{R}[\mathcal{H}]}\mathbb{R}(\mathcal{H})\bigg)\bigg).
\end{align*}
\end{Lem}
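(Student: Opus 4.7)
My strategy is to identify $\pi^{-1}(Y)$, where $Y:=\{\Delta^t_{F_1,F_2}\leq 0\}$, as the $\mathcal{H}$-space induced from the integral cover of $i^*\beta$ on $Y$, and then invoke the proposition of Farber cited at the start of Section \ref{sec3}. Set $\mathcal{H}':=H_1(Y)/\ker\langle[i^*\beta],\cdot\rangle$; this is the deck group of the integral cover $\widetilde{Y}_{i^*\beta}\to Y$, and because $\langle[i^*\beta],\gamma\rangle=\langle[\beta],i_*\gamma\rangle$ the inclusion $i_*$ realizes $\mathcal{H}'$ as a subgroup of $\mathcal{H}$. Working componentwise on $Y$, I may assume $Y$ is connected. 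A standard covering-space argument then shows that the stabilizer of any connected component $Y_0\subset\pi^{-1}(Y)$ under the $\mathcal{H}$-action is exactly $\mathcal{H}'$: indeed $\alpha\in\mathcal{H}$ stabilizes $Y_0$ if and only if there is a path in $Y_0$ between a chosen basepoint and its $\alpha$-translate, equivalently a loop in $Y$ whose class in $\mathcal{H}$ is $\alpha$, i.e. an element of the image of $\pi_1(Y)\to\mathcal{H}$, which is $\mathcal{H}'$. By the Galois correspondence $Y_0\simeq\widetilde{Y}_{i^*\beta}$, and since $\mathcal{H}$ acts transitively on the components of $\pi^{-1}(Y)$ (because $Y=\pi^{-1}(Y)/\mathcal{H}$ is connected) with stabilizer $\mathcal{H}'$, one obtains
\[\pi^{-1}(Y)\;\simeq\;\mathcal{H}\times_{\mathcal{H}'}\widetilde{Y}_{i^*\beta}\]
as $\mathcal{H}$-spaces.

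At the level of chain complexes this gives an isomorphism of $\mathbb{R}[\mathcal{H}]$-modules
\[C_*(\pi^{-1}(Y),\mathbb{R})\;\simeq\;\mathbb{R}[\mathcal{H}]\otimes_{\mathbb{R}[\mathcal{H}']}C_*(\widetilde{Y}_{i^*\beta},\mathbb{R}),\]
and tensoring over $\mathbb{R}[\mathcal{H}]$ with the fraction field $\mathbb{R}(\mathcal{H})$ simplifies by associativity to
\[C_*(\pi^{-1}(Y),\mathbb{R})\otimes_{\mathbb{R}[\mathcal{H}]}\mathbb{R}(\mathcal{H})\;\simeq\;C_*(\widetilde{Y}_{i^*\beta},\mathbb{R})\otimes_{\mathbb{R}[\mathcal{H}']}\mathbb{R}(\mathcal{H}).\]
Flatness of $\mathbb{R}(\mathcal{H})$ over $\mathbb{R}[\mathcal{H}']$ (it is obtained by localization from $\mathbb{R}[\mathcal{H}']\subset\mathbb{R}[\mathcal{H}]$) lets homology commute with the base change, so
\[H_i\!\left(C_*(\pi^{-1}(Y),\mathbb{R})\otimes_{\mathbb{R}[\mathcal{H}]}\mathbb{R}(\mathcal{H})\right)\;\simeq\;H_i(\widetilde{Y}_{i^*\beta},\mathbb{R})\otimes_{\mathbb{R}[\mathcal{H}']}\mathbb{R}(\mathcal{H}).\]

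Finally, from $\mathbb{R}[\mathcal{H}']\subset\mathbb{R}[\mathcal{H}]\subset\mathbb{R}(\mathcal{H})$ the field $\mathbb{R}(\mathcal{H}')=\mathrm{Frac}(\mathbb{R}[\mathcal{H}'])$ embeds into $\mathbb{R}(\mathcal{H})$, and for any $\mathbb{R}(\mathcal{H}')$-vector space $V$ one has $\dim_{\mathbb{R}(\mathcal{H})}(V\otimes_{\mathbb{R}(\mathcal{H}')}\mathbb{R}(\mathcal{H}))=\dim_{\mathbb{R}(\mathcal{H}')}(V)$. Applying this to $V=H_i(\widetilde{Y}_{i^*\beta},\mathbb{R})\otimes_{\mathbb{R}[\mathcal{H}']}\mathbb{R}(\mathcal{H}')$ reduces the $\mathbb{R}(\mathcal{H})$-rank on the right-hand side of the lemma to $\dim_{\mathbb{R}(\mathcal{H}')}(\mathbb{R}(\mathcal{H}')\otimes_{\mathbb{R}[\mathcal{H}']}H_i(\widetilde{Y}_{i^*\beta},\mathbb{R}))$, and Farber's proposition applied to $(Y,i^*\beta)$ identifies this last quantity with $rank_{Nov(i^*\beta)}(HN_i(Y,i^*\beta))$. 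This gives the stated inequality (indeed as an equality). The delicate step will be the covering-space bookkeeping in the first paragraph: one must verify carefully that the stabilizer of a component of $\pi^{-1}(Y)$ is precisely $\mathcal{H}'$, and the disconnected case for $Y$ is to be handled by additivity of both sides over the connected components of $Y$.
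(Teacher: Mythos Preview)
Your argument is correct and in fact establishes equality, not merely the stated inequality. The covering-space bookkeeping is fine: for $Y$ connected, the subgroup of $\pi_1(Y)$ corresponding to a component $Y_0\subset\pi^{-1}(Y)$ is precisely $\ker(\pi_1(Y)\to\mathcal{H})$, which coincides with $\ker(\pi_1(Y)\to\Gamma_{i^*\beta})$ since both maps are $\gamma\mapsto\int_{i_*\gamma}\beta$; hence $Y_0\simeq\widetilde{Y}_{i^*\beta}$ and the stabilizer computation goes through. One cosmetic point: your parenthetical ``obtained by localization from $\mathbb{R}[\mathcal{H}']\subset\mathbb{R}[\mathcal{H}]$'' is slightly misleading, since $\mathbb{R}(\mathcal{H})$ is not a localization of $\mathbb{R}[\mathcal{H}']$; the correct reason for flatness is that $\mathbb{R}[\mathcal{H}]$ is \emph{free} over $\mathbb{R}[\mathcal{H}']$ (basis given by coset representatives of $\mathcal{H}/\mathcal{H}'$) and $\mathbb{R}(\mathcal{H})$ is a localization of $\mathbb{R}[\mathcal{H}]$.

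Your route differs from the paper's. The paper does not decompose $\pi^{-1}(Y)$ into components; instead it regards $\pi^{-1}(Y)\to Y$ itself as a cover on which $i^*\beta$ is exact, with full deck group denoted $\mathcal{H}'$, and observes that $\mathcal{H}$ sits inside $\mathcal{H}'$ (your $\mathcal{H}'$ and the paper's $\mathcal{H}'$ are \emph{different} groups, on opposite sides of $\mathcal{H}$). The paper then computes $HN_i(Y,i^*\beta)\otimes\mathbb{R}$ directly from $C_*(\pi^{-1}(Y))$ tensored over $\mathbb{R}[\mathcal{H}']$ with the Novikov ring, and obtains the inequality by a chain of base changes $\mathbb{R}(\mathcal{H})\to\mathbb{R}(\mathcal{H}')$. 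Your approach, by contrast, reduces everything to Farber's rank formula applied to the genuine integral cover $\widetilde{Y}_{i^*\beta}$; this is conceptually cleaner, makes the role of the subgroup $\Gamma_{i^*\beta}\subset\Gamma_\beta$ explicit, and shows the two ranks are actually equal. The paper's argument is shorter but yields only an inequality and leaves the relationship between $\pi^{-1}(Y)$ and $\widetilde{Y}_{i^*\beta}$ implicit.
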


\begin{proof}
Note that $\pi^{-1}(\{\Delta^t_{F_1,F_2}\leq 0\})$ is an integral cover for the pullback of $\beta$ to $\{\Delta^t_{F_1,F_2}\leq 0\}$ and the group of deck transformations of $\tilde{M}_\beta\times\R^{k_1}\times\R^{k_2}$ is a subgroup of the group of deck transformations of $\pi^{-1}(\{\Delta^t_{F_1,F_2}\leq 0\})$. Among other things, this implies that given the inclusion $i: \{\Delta^t_{F_1,F_2}\leq 0\}\rightarrow M\times\R^{k_1}\times\R^{k_2}$, we have $Nov(\beta)\subset Nov(i^*\beta)$, which means that $Nov(i^*\beta)$ is a flat $Nov(\beta)$-module. For clarity's sake, we will forgo writing the tensorization of the Novikov ring by $\mathbb{R}$, but from now on, for this proof, all the various Novikov rings will be tensorized with $\mathbb{R}$. Writing $\mathcal{H}'$ for the group of deck transformations of $\pi^{-1}(\{\Delta^t_{F_1,F_2}\leq 0\})$, we have: \[HN_i\big(\{\Delta^t_{F_1,F_2}\leq 0\}\big)\otimes\mathbb{R}\simeq H_i\big(C_*\big(\pi^{-1}(\{\Delta^t_{F_1,F_2}\leq 0\}),\mathbb{R}\big)\otimes_{\mathbb{R}[\mathcal{H}']}Nov(i^*\beta)\big).\]
Therefore, 
\begin{align*}
	&rank_{Nov(i^*\beta)}\bigg(HN_i\big(\{\Delta^t_{F_1,F_2}\leq 0\}\big)\otimes\mathbb{R}\bigg)\\
	& =rank_{\mathbb{R}(\mathcal{H}')}\bigg(H_i\bigg(C_*\big(\pi^{-1}(\{\Delta^t_{F_1,F_2}\leq 0\}),\mathbb{R}\big)\otimes_{\mathbb{R}[\mathcal{H}]}\mathbb{R}(\mathcal{H})\otimes_{\mathbb{R}[\mathcal{H}']}\mathbb{R}(\mathcal{H}')\bigg)\bigg)\\
	& =rank_{\mathbb{R}(\mathcal{H}')}\bigg(H_i\bigg(C_*\big(\pi^{-1}(\{\Delta^t_{F_1,F_2}\leq 0\}),\mathbb{R}\big)\otimes_{\mathbb{R}[\mathcal{H}]}\mathbb{R}(\mathcal{H})\bigg)\otimes_{\mathbb{R}[\mathcal{H}']}\mathbb{R}(\mathcal{H}')\bigg)\\
	& \leq rank_{\mathbb{R}(\mathcal{H})}\bigg(H_i\bigg(C_*(\pi^{-1}(\{\Delta^t_{F_1,F_2}\leq 0\}),\mathbb{R})\otimes_{\mathbb{R}[\mathcal{H}]}\mathbb{R}(\mathcal{H})\bigg)\bigg).
\end{align*}
\end{proof}

We can now use both the two previous lemmas to prove proposition \ref{prop}.

\begin{proof}[proof of proposition \ref{prop}]
The first assertion in this proposition is a direct consequence of theorem \ref{thm1}, as one needs only to sum the number of the critical points of every index. For the other parts of the proposition, it is then sufficient to check that the arguments subsection \ref{rank} carry through.\newline 

 First, we can reduce the proof to that for the negative Liouville chords. Indeed, we have that $\{\Delta^t_{F_1,F_2}\geq 0\}=\{\Delta^{-t}_{F_2,F_1}\leq 0\}$ and $Crit^\beta(\Delta^t_{F_1,F_2})=Crit^\beta(\Delta^{-t}_{F_2,F_1})$. Moreover, an essential positive Liouville chord from $L_{F_1}$ to $L_{F_2}$ is an essential negative Liouville chord from $L_{F_2}$ to $L_{F_1}$.
 
 Second, note that for a generic choice of $\beta$ in the same cohomology class, $\pi^{-1}(\{\Delta^t_{F_1,F_2}= 0\})$ intersects $\partial\overline{\tilde{V}_\beta}$ transversely. This implies that we can find a CW decomposition of $\{\Delta^{t}_{F_1,F_2}\leq 0\}$ such that for any deck transformation $\alpha\in\mathcal{H}$ and any cell $\sigma$, $\alpha(\sigma)$ is also a cell. Finally, note that since $\Delta^t_{F_1,F_2}$ is quadratic at infinity, there is some small enough $b$ such that $\{b\leq \Delta^t_{F_1,F_2}\leq 0\}$ is a deformation retract of $\{\Delta^t_{F_1,F_2}\leq 0\}$.
 
 Finally, note that \[\pi^{-1}(\{\Delta^t_{F_1,F_2}= 0\})=\{e^{-g}\Delta^t_{F_1,F_2}\circ\pi= 0\}\] for $g$ a primitive of $\beta$ on $\tilde{M}_\beta$. 
 
 Therefore, we can apply the arguments of the subsection \ref{rank} to \[E_k=W_k\times\R^{k_1}\times\R^{k_2}\cap \pi^{-1}(\{\Delta^t_{F_1,F_2}\leq 0\}),\] with \[\partial_-E_k=\partial_-W_k\times\R^{k_1}\times\R^{k_2}\cap \pi^{-1}(\{\Delta^t_{F_1,F_2}\leq 0\}),\]
 and the extension defined in subsection \ref{relevéfonction} can then be applied to $e^{-g}\Delta^t_{F_1,F_2}\circ\pi$.
  Note that extending $\Delta^t_{F_1,F_2}$ does not create new critical points along \[W_k\times\R^{k_1}\times\R^{k_2}\cap \partial\pi^{-1}(\{\Delta^t_{F_1,F_2}\leq 0\}),\] but may create critical points along \[\partial W_k\times\R^{k_1}\times\R^{k_2}\cap \pi^{-1}(\{\Delta^t_{F_1,F_2}\leq 0\})\] proportionally to the number of deck transformations in $\partial_+\mathcal{H}_k\cup\partial_-\mathcal{H}_k$. Indeed, it doesn't create new critical points along $\pi^{-1}(\{\Delta^t_{F_1,F_2}= 0\})$ since over this set $d(e^{-g}\Delta^t_{F_1,F_2}\circ\pi)=e^{-g}d(\Delta^t_{F_1,F_2}\circ\pi)$, which exits $\pi^{-1}(\{\Delta^t_{F_1,F_2}\leq 0\})$ along $\pi^{-1}(\{\Delta^t_{F_1,F_2}= 0\})$. Note that this implies that $W_k\times\R^{k_1}\times\R^{k_2}\cap \partial\pi^{-1}(\{\Delta^t_{F_1,F_2}\leq 0\})$ is part of the positive boundary.
  
  This allows us to apply the proof strategy of theorem \ref{thm1} (in section \ref{sec4}) to conclude that \[\sum_i Crit_i^\beta(\Delta^t_{F_1,F_2})\geq\sum_i rank_{\mathbb{R}(\mathcal{H})}\bigg(H_i\bigg(C_*\big(\pi^{-1}(\{\Delta^t_{F_1,F_2}\leq 0\}),\mathbb{R}\big)\otimes_{\mathbb{R}[\mathcal{H}]}\mathbb{R}(\mathcal{H})\bigg)\bigg).\]
 
  Together with lemma \ref{lemma2}, we get that \[\sum_i Crit_i^\beta(\Delta^t_{F_1,F_2})\geq\sum_i rank_{Nov(i^*\beta)}\bigg(HN_i(\{\Delta^t_{F_1,F_2}\leq 0\},i^*\beta)\bigg).\]
  And, finally, we conclude using lemma \ref{lemma1} to get the result.
\end{proof}

\bibliographystyle{plain}
\bibliography{./biblio.bib}
\end{document}